\newtheorem{theorem}{Theorem}[section]
\newtheorem{lemma}[theorem]{Lemma}
\theoremstyle{definition}
\newtheorem{definition}[theorem]{Definition}
\theoremstyle{remark}
\newtheorem{remark}[theorem]{Remark}
\newtheorem{conj}[theorem]{Conjecture}
\numberwithin{equation}{section}
\begin{document}

%
%
%
%
%
%
%
%
%

\title[Divisibility of certain $\ell$-regular partitions by $2$]
 {Divisibility of certain $\ell$-regular partitions by $2$}

\author{Ajit Singh}
\address{Department of Mathematics, Indian Institute of Technology Guwahati, Assam, India, PIN- 781039}
\email{ajit18@iitg.ac.in}

\author{Rupam Barman}
\address{Department of Mathematics, Indian Institute of Technology Guwahati, Assam, India, PIN- 781039}
\email{rupam@iitg.ac.in}

\date{October 27, 2021}


\subjclass{Primary 05A17, 11P83, 11F11}

\keywords{$\ell$-regular partitions; Eta-quotients; modular forms; arithmetic density}

\dedicatory{}

\begin{abstract} For a positive integer $\ell$, let $b_{\ell}(n)$ denote the number of $\ell$-regular partitions of a nonnegative integer $n$. Motivated by some recent conjectures of Keith and Zanello, we establish infinite families of congruences modulo $2$ for $b_3(n)$ and $b_{21}(n)$. We prove a specific case of a conjecture of Keith and Zanello on self-similarities of $b_3(n)$ modulo $2$. We next prove that the  series $\sum_{n=0}^{\infty}b_9(2n+1)q^n$ is lacunary modulo arbitrary powers of $2$. We also prove that 
the  series $\sum_{n=0}^{\infty}b_9(4n)q^n$ is lacunary modulo $2$. 
\end{abstract}

\maketitle
\section{Introduction and statement of results} 
 A partition of a positive integer $n$ is any non-increasing sequence of positive integers whose sum is $n$. The number of such partitions of $n$ is denoted by $p(n)$. The partition function has many congruence properties modulo primes and powers of primes. Ramanujan discovered beautiful congruences satisfied by $p(n)$ modulo $5, 7$ and $11$. Let $\ell$ be a fixed positive integer. An $\ell$-regular partition of a positive integer $n$ is a partition of $n$ such that none of its part is divisible by $\ell$. Let $b_{\ell}(n)$ be the number of $\ell$-regular partitions of $n$. The generating function for $b_{\ell}(n)$ is given by 
\begin{align}\label{gen_fun}
G_{\ell}(q):=\sum_{n=0}^{\infty}b_{\ell}(n)q^n=\frac{f_{\ell}}{f_1},
\end{align}
where $f_k:=(q^k; q^k)_{\infty}=\prod_{j=1}^{\infty}(1-q^{jk})$ and $k$ is a positive integer.
 \par 
 In a very recent paper \cite{Keith2021}, Keith and Zanello studied $\ell$-regular partition for certain values of $\ell$. They proved various congruences for $b_3(n)$ and make the following conjecture regarding the self-similarity of $b_3(n)$.
\begin{conj}\label{conj1}
For any prime $p>3$, let $\gamma\equiv-24^{-1}\pmod{p^2}$, $0<\gamma<p^2$. It holds for a positive proportion of primes $p$ that 
\begin{align}\label{new-eq-2}
\sum_{n=0}^{\infty}b_{3}(2(pn+\gamma))q^n\equiv \sum_{n=0}^{\infty}b_{3}(2n)q^{pn}\pmod 2.
\end{align}
\end{conj}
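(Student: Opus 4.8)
The plan is to translate \eqref{new-eq-2} into a statement about a single Hecke eigenform modulo $2$ and then invoke the vanishing of its eigenvalues modulo $2$ along a positive-density set of primes. Write $c(n):=b_3(2n)$. Comparing the coefficient of $q^n$ on the two sides, \eqref{new-eq-2} is equivalent to the coefficient congruences $c(pn+\gamma)\equiv c(n/p)\pmod 2$ when $p\mid n$ and $c(pn+\gamma)\equiv 0\pmod 2$ when $p\nmid n$, valid for every $n\ge 0$. Since $p>3$ we have $\gcd(p,24)=1$, so $p^2\equiv 1\pmod{24}$; hence the representative of $-24^{-1}\bmod p^2$ in $(0,p^2)$ is exactly $\gamma=(p^2-1)/24$, and the bookkeeping identity
\[
24(pn+\gamma)+1=24pn+p^2=p\,(24n+p)
\]
holds. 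This is precisely why the shift $-24^{-1}$ enters: it turns an evaluation of $c$ at $pn+\gamma$ into the evaluation, at the index $p(24n+p)$, of the function normalized by $24\,(\cdot)+1$, and this index is visibly a multiple of $p$.

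Next I would realize the series of the $c(n)$ as the reduction of a modular form. Starting from $G_3(q)=f_3/f_1$, a routine $2$-dissection (extract the even part $\tfrac12\bigl(G_3(q)+G_3(-q)\bigr)$ and replace $q^2$ by $q$) gives an eta-quotient for $\sum_{n\ge0}b_3(2n)q^n$, which simplifies modulo $2$ by means of $f_{2k}\equiv f_k^2\pmod 2$. The key step is to show that, after $q\mapsto q^{24}$ and multiplication by $q$, the normalized series
\[
F(z):=\sum_{n\ge 0}b_3(2n)\,q^{24n+1}
\]
is congruent modulo $2$ to a Hecke eigenform $g=\sum_m A(m)q^m$ of some integral weight $k$ on a congruence subgroup $\Gamma_0(N)$. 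Note that $F$ has leading coefficient $c(0)=1$ in degree $1$, so the eigenform is normalized, and by construction $A(24n+1)\equiv c(n)\pmod 2$ while $A(m)\equiv 0\pmod 2$ for $m\not\equiv 1\pmod{24}$.

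With such an eigenform in hand, I would apply the Hecke recursion $A(pm)+p^{k-1}A(m/p)=\lambda_p A(m)$ at $m=24n+p$. By the bookkeeping identity, $A\bigl(p(24n+p)\bigr)=A\bigl(24(pn+\gamma)+1\bigr)\equiv c(pn+\gamma)$, whereas $A(m/p)=A(24n/p+1)$ is $\equiv c(n/p)$ when $p\mid n$ and vanishes when $p\nmid n$ (since then $p\nmid m$). As $p$ is odd, $p^{k-1}\equiv 1\pmod 2$, so for every prime $p$ with $\lambda_p\equiv 0\pmod 2$ the recursion collapses modulo $2$ to exactly $c(pn+\gamma)\equiv c(n/p)$ for $p\mid n$ and $c(pn+\gamma)\equiv 0$ otherwise; summing over $n$ recovers \eqref{new-eq-2}.

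It remains to see that $\lambda_p\equiv 0\pmod 2$ for a \emph{positive proportion} of primes. For this I would use the mod-$2$ Galois representation $\bar\rho_g\colon\mathrm{Gal}(\overline{\mathbb Q}/\mathbb Q)\to\mathrm{GL}_2(\overline{\mathbb F}_2)$ attached to $g$, for which $\lambda_p\equiv\mathrm{tr}\,\bar\rho_g(\mathrm{Frob}_p)\pmod 2$ at all $p\nmid 2N$. By the Chebotarev density theorem, the set of such $p$ has density $\#\{h\in\mathrm{Im}\,\bar\rho_g:\mathrm{tr}\,h=0\}/\#\,\mathrm{Im}\,\bar\rho_g$, which is strictly positive (the identity element already contributes, its trace being $2\equiv 0$, and the density is even larger when $g$ has complex multiplication). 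These are exactly the primes to which the previous paragraph applies. The main obstacle is the second step: identifying the precise eta-quotient for $\sum b_3(2n)q^n$ modulo $2$ and proving that its normalization is genuinely congruent to a Hecke eigenform on an explicit $\Gamma_0(N)$, with coefficients supported on $m\equiv1\pmod{24}$. Once that modular input is secured, the arithmetic driven by $p^2\equiv1\pmod{24}$ and the Chebotarev count are routine.
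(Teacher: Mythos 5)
You should first note that this statement is a \emph{conjecture} (due to Keith and Zanello): the paper does not prove it, and only establishes the single case $p=17$ (Theorem \ref{thmnew}) by explicitly computing $G_{3,1}(z)|T_{17}$ on a space of weight-$3$ forms of level $51$ and verifying the required congruence up to the Sturm bound $18$. Your preparatory work is sound and matches the mechanism behind the known cases $p=13,17$: indeed $\gamma=(p^2-1)/24$, the identity $24(pn+\gamma)+1=p(24n+p)$ is correct, the coefficientwise reformulation of \eqref{new-eq-2} is right, and granting a normalized eigenform $g=\sum A(m)q^m$ with $A(24n+1)\equiv b_3(2n)\pmod 2$ and $\lambda_p\equiv 0\pmod 2$, your Hecke recursion at $m=24n+p$ does recover \eqref{new-eq-2} (the half-integral weight of $\eta^4(24z)/\eta(72z)$ is cosmetic, fixable by multiplying by an eta-quotient $\equiv 1\pmod 2$, e.g.\ of the shape $\eta^2(72z)/\eta(144z)$, exactly as in the proof of Theorem \ref{thm3}).

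The genuine gap is the step you yourself flag as ``the main obstacle,'' and it is not a routine verification but the entire content of the conjecture: there is no reason why $F(z)=\sum_{n\geq 0}b_3(2n)q^{24n+1}$ should be congruent, coefficient by coefficient, to a \emph{single} Hecke eigenform modulo $2$. Generically $F$ is a linear combination of several generalized eigenvectors, and the mod-$2$ Hecke algebra is notoriously non-semisimple: $T_p$ can act on the relevant space with a nonzero nilpotent part even when every Hecke eigenvalue is $\equiv 0\pmod 2$. Your Chebotarev argument (where the identity Frobenius gives trace $2\equiv 0$ in characteristic $2$) would at best show that, for a positive proportion of $p$, all eigenvalues of $T_p$ on the constituents of $F$ are even — from which one concludes only that some \emph{power} of $T_p$ annihilates $F$ mod $2$, not that $F|T_p\equiv 0$; but the self-similarity \eqref{new-eq-2} is precisely the single-operator statement. (One would need either to exhibit the eigenform congruence explicitly, or prove semisimplicity of the $T_p$-action on the relevant mod-$2$ space, or invoke a nilpotency theorem strong enough for this level in the style of Ono--Taguchi; the proposal supplies none of these, nor the attached mod-$2$ Galois representations and their images.) This missing modular input is exactly why the statement remains open and why the paper, following Keith--Zanello's treatment of $p=13$, can only settle individual primes by Sturm-bound computations rather than obtain a positive proportion.
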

In \cite[Theorem 7]{Keith2021}, Keith and Zanello proved one specific case of Conjecture \ref{conj1} corresponding to $p=13$. In the following theorem, we prove another specific case of Conjecture \ref{conj1} corresponding to $p=17$. 
\begin{theorem}\label{thmnew}
	It holds that 
	\begin{align*}
	\sum_{n=0}^{\infty}b_{3}(34n+24)q^n\equiv \sum_{n=0}^{\infty}b_{3}(2n)q^{17n}\pmod 2,
	\end{align*}
	and therefore 
	$$b_3(2\cdot 17^2n+58)\equiv 0\pmod 2,$$
	and by iteration,
	\begin{align*}b_3\left(2\cdot 17^{2k}n+17^{2k-2}\cdot 58+24\left(\frac{17^{2k-2}-1}{288}\right)\right)\equiv 0\pmod 2
	\end{align*}
	for all $k\geq 1$.
\end{theorem}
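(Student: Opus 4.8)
The plan is to work entirely with the $2$-dissection of the generating function $G_3(q)=f_3/f_1$ and reduce everything to an identity between modular forms (or eta-quotients) modulo $2$. The key structural fact I would exploit is that modulo $2$ we have $f_k \equiv f_{2k}^{1/2}$ in a suitable formal sense, more precisely $(1-q^{jk})\equiv(1-q^{2jk})$-type collapses coming from $f_1^2\equiv f_2\pmod 2$; iterating, $f_1^{2^s}\equiv f_{2^s}\pmod 2$. Using $G_3(q)=f_3/f_1$ I would first extract the even-index part $\sum_n b_3(2n)q^n$ by a $2$-dissection, expressing it as an eta-quotient in $q$ modulo $2$. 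The goal is to identify $\sum_{n=0}^{\infty}b_3(2n)q^n$ modulo $2$ with the reduction of a single holomorphic modular form, ideally (after the standard $f_1\equiv f_1$ manipulations) something like a power of a theta series or a known lacunary form whose Fourier coefficients are supported on a thin set.

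Next I would pass to the Hecke-theoretic heart of the argument. Once $\sum_n b_3(2n)q^n$ is written modulo $2$ as (the reduction of) a modular form $F(q)=\sum a(n)q^n$ of some weight and level, I would apply the Hecke operator $T_p$ at $p=17$. The congruence to be proved, $\sum_n b_3(34n+24)q^n\equiv\sum_n b_3(2n)q^{17n}\pmod 2$, says precisely that the $(17n+\gamma)$-arithmetic-progression part of $F$ equals the $U_{17}$-image-free piece, i.e. that $F$ is an eigenform modulo $2$ for $T_{17}$ with eigenvalue $0$ (equivalently $a(17)\equiv 0$ and the $T_{17}$-relation forces $a(17n+\gamma)\equiv a(n/17)$-type collapse). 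Concretely, I would compute $\gamma\equiv-24^{-1}\pmod{17^2}$, check $24\cdot\gamma\equiv-1\pmod{289}$ to pin down the residue $24$ appearing in $34n+24$, and then verify that the relevant Hecke eigenvalue vanishes modulo $2$ for $p=17$. This reduces the whole theorem to a finite computation: checking the eigenform congruence up to the Sturm bound for the space in which $F$ lives.

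From the basic congruence $\sum_n b_3(34n+24)q^n\equiv\sum_n b_3(2n)q^{17n}\pmod 2$, the two consequences follow formally. Comparing coefficients of $q^{17n+c}$ for the residues $c$ not hit on the right-hand side (the right side only supports exponents divisible by $17$) immediately gives $b_3(34(17n+?)+24)\equiv 0$, and choosing the progression that lands on $b_3(2\cdot17^2 n+58)$ yields the stated vanishing; here one checks $34\cdot 17\cdot n$-bookkeeping so that $34(17n)+24$ and $2\cdot 17^2 n+58$ agree, using $58=34\cdot 1+24$. The iterated family comes from substituting the basic congruence into itself: replacing $n$ by $17n+\gamma$ repeatedly multiplies the modulus by $17^2$ and shifts the constant by the geometric-series term $24\cdot\frac{17^{2k-2}-1}{288}$, where $288=24\cdot 12=2\cdot 12^2$ arises as $17^2-1$ divided appropriately; I would verify this telescoping by induction on $k$.

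The hard part will be step two, namely realizing $\sum_n b_3(2n)q^n$ modulo $2$ as an honest modular form of identifiable weight and level and then locating the correct space so that the Sturm bound makes the eigenvalue check finite and honest. The $2$-dissection and the $f_1^{2^s}\equiv f_{2^s}$ reductions are routine but must be done carefully to avoid losing information modulo $2$; the genuinely delicate point is controlling the level (a divisor involving $2$, $3$, and $17$) so that $T_{17}$ acts on a space of manageable dimension, after which the vanishing of the eigenvalue modulo $2$ at $p=17$ is the single arithmetic input that distinguishes this case from a generic prime. I expect that, as in the $p=13$ case of \cite{Keith2021}, the eigenform in question is essentially a CM form modulo $2$, which is what makes both the eigenvalue computation tractable and, ultimately, the lacunarity statements elsewhere in the paper possible.
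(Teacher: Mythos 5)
Your proposal follows essentially the same route as the paper: the $2$-dissection giving $\sum_{n\ge 0} b_3(2n)q^n\equiv f_1^4/f_3\pmod 2$, realization of this series inside a space of modular forms, application of the Hecke operator $T_{17}$ with a finite Sturm-bound verification, and then the formal derivation of both corollaries (the progression vanishing from the $17\nmid n$ exponents and the telescoping with $17^2-1=288$). The only cosmetic difference is in implementation: the paper works at level $51$, where $\chi_0(17)=0$ so that $T_{17}$ acts as $U_{17}$ with the factorization property, multiplying through by the auxiliary eta-factors $f_{51}^2 f_{17}$ and $f_3^2 f_1$ and checking $G_{3,1}(z)|T_{17}\equiv G_{3,2}(z)\pmod 2$ up to the Sturm bound $18$, rather than your equivalent eigenvalue-$0$ formulation of the same congruence.
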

If we assume that Conjecture \ref{conj1} is true for $p=29$,  then using the congruence $\sum_{n=0}^{\infty}b_{3}(2(29n+35))q^n\equiv \sum_{n=0}^{\infty}b_{3}(2n)q^{29n}\pmod 2$, one can deduce infinite families of congruences of the form 
\begin{align}\label{new-eq-1}
b_{3}(2(29^2\cdot n+29k+35))\equiv 0\pmod 2, 
\end{align}
where $1\leq k\leq 28$.
We do not know whether Conjecture \ref{conj1} is true or not for $p=29$. However, in the following theorem, we prove the congruences \eqref{new-eq-1} without assuming \eqref{new-eq-2} for $p=29$. 
\begin{theorem} \label{thm1}
Let $\alpha\in\{6,64,93,122,151,180,209,238,267,296,325,354,383,412,\\ 441,470,499,528,557,586,615,644,673,
702,731,760,789,818\}$. Then for all $n\geq 0$, we have 
$$b_3(2(29^2\cdot n+\alpha))\equiv 0\pmod 2.$$
\end{theorem}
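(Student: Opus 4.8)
The plan is to prove the congruences in Theorem \ref{thm1} directly, bypassing the unproven Conjecture \ref{conj1} for $p=29$, by working with the $2$-dissection of the generating function $G_3(q)$ and reducing everything to an eta-quotient that is essentially a single theta/modular form modulo $2$. First I would start from \eqref{gen_fun} with $\ell=3$, so that $\sum_{n\geq 0} b_3(n)q^n = f_3/f_1$, and use the elementary fact that modulo $2$ one has $f_1 \equiv f_1$ but, more usefully, $f_k^2 \equiv f_{2k} \pmod 2$ and hence $1/f_1 \equiv f_1/f_2^{?}$-type simplifications. The standard first move is to extract the even-indexed coefficients: I would compute the $2$-dissection of $f_3/f_1$ to obtain an explicit eta-quotient expression for $\sum_{n\geq 0} b_3(2n)q^n \pmod 2$. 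Experience with these problems (and the parallel $p=13,17$ cases already in the paper) suggests that modulo $2$ this series collapses to something of the shape $\sum_{n\geq 0} b_3(2n)q^n \equiv \psi(q) \text{ or } \varphi(q)$-like theta series, or more precisely to a sum $\sum_{j} q^{a_j}$ indexed by a quadratic form, because $f_1^3 \equiv f_1 f_2 \cdot(\cdots)$ reductions frequently turn these generating functions into Jacobi-triple-product theta functions modulo $2$.

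Once I have $\sum_{n\geq 0} b_3(2n)q^n \equiv \sum_{\text{lattice}} q^{Q(\mathbf{x})} \pmod 2$ for an explicit quadratic form $Q$ (I expect, given the $24^{-1}$ and the index $2\cdot 29^2$, a form like $Q \sim \tfrac{1}{24}(\text{something})$ tied to triangular or pentagonal numbers), the congruence $b_3(2m)\equiv 0 \pmod 2$ becomes the statement that $m$ is \emph{not} represented by $Q$. The key step is then a local argument modulo $29^2$: for each target residue $\alpha$ in the listed set, I would show that the exponent $29^2 n + \alpha$ never equals $Q(\mathbf{x})$ for any integer point $\mathbf{x}$, by analyzing $Q$ modulo $29$. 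Concretely, writing the representation condition as $24(29^2 n+\alpha)+c \equiv \square \pmod{29}$ (or a pair of squares, depending on the form), I would determine the residues modulo $29$ that are quadratic residues and check that $24\alpha + c$ lands on a quadratic nonresidue (or otherwise unreachable residue class) modulo $29$ for precisely the $28$ values of $\alpha$ listed. The reason there are exactly $28 = 29-1$ exceptional classes, rather than all of them, is that the $\gamma \equiv -24^{-1}$ class and a few others do admit representations; this matches the Keith–Zanello prediction that only the shifted classes $29k+35$ with $1\le k\le 28$ vanish.

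The main obstacle I anticipate is producing the correct clean theta-series reduction of $\sum b_3(2n)q^n$ modulo $2$: the $2$-dissection of $f_3/f_1$ does not terminate in one obvious identity, and I would likely need an auxiliary $2$-dissection formula for $1/(f_1 f_3)$ or for $f_3/f_1$ expressed via $f_2,f_4,f_6,f_{12}$, then repeatedly apply $f_k^2\equiv f_{2k}\pmod 2$ to collapse squares. Getting the exact quadratic form and the additive constant $c$ right is delicate, because an error there shifts all $28$ residue classes. Once that identity is pinned down, the modulo-$29$ verification is a finite check over residues $0,1,\dots,28$ that I can carry out by listing the quadratic residues modulo $29$ and intersecting with the arithmetic progression of reachable exponents; this is routine but must be done carefully to confirm that exactly the stated set of $28$ values of $\alpha$ yields nonrepresentable exponents. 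If the reduction instead produces a sum of two theta functions (a binary rather than unary quadratic form), the local analysis modulo $29$ would involve representability by a binary quadratic form, which I would handle via the theory of which residues modulo $29$ are represented by the relevant genus.
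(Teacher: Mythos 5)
Your opening move matches the paper: extracting the even part of the dissection \eqref{eqn-new-3} gives $\sum_{n\geq 0}b_3(2n)q^n\equiv f_1^4/f_3\pmod 2$. The gap is in everything after that. The best available theta reduction (via the identity $f_1^3/f_3\equiv 1+\sum_{n\in\mathbb{Z}}q^{(3n-1)^2}\pmod 2$, quoted in this very paper, together with Euler's pentagonal number theorem) is
\begin{equation*}
\frac{f_1^4}{f_3}\equiv \sum_{m\in\mathbb{Z}}q^{m(3m-1)/2}+\Bigl(\sum_{m\in\mathbb{Z}}q^{m(3m-1)/2}\Bigr)\Bigl(\sum_{n\in\mathbb{Z}}q^{(3n-1)^2}\Bigr)\pmod 2,
\end{equation*}
so the support is governed not by a unary form but by the \emph{binary} condition $24N+1=x^2+24y^2$ (with $\gcd(x,6)=1$ automatic and $3\nmid y$), where $N$ is the exponent. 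For $N\equiv\alpha\pmod{841}$ with $\alpha$ in the stated list one checks $24\alpha+1\equiv 0\pmod{29}$ but $\not\equiv 0\pmod{841}$, so $v_{29}(24N+1)=1$ exactly. (Note this already contradicts your counting heuristic: quadratic nonresidues mod $29$ would carve out only $14$ classes, not $28$; the $28$ classes are exactly those with $29$-adic valuation one, the excluded class being $\alpha=35$ with $24\cdot 35+1=29^2$.) Odd valuation does kill the unary pentagonal part, since $24N+1$ cannot then be a square. But it does \emph{not} kill the binary part: $-24\equiv 5\equiv 11^2\pmod{29}$ is a quadratic residue, so $29$ splits for the discriminant $-96$, and integers exactly divisible by $29$ are locally — and in fact globally — represented by $x^2+24y^2$. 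Hence no analysis of residues or genera modulo $29$ or $29^2$ can establish nonrepresentability; the theorem is true because the representations occur with \emph{even multiplicity}, a Hecke-eigenvalue-mod-$2$ phenomenon at the split prime $29$, which is precisely the hard content of Conjecture \ref{conj1} for $p=29$ that the paper explicitly says it cannot prove. Your fallback remark about handling the binary case "via the theory of which residues modulo $29$ are represented by the relevant genus" runs into exactly this wall: the obstruction is not local.

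The paper sidesteps all of this with Radu's algorithm. It applies Lemma \ref{lem1} to the eta-quotient $f_1^4/f_3$ with $(m,M,N,r,t)=(841,3,87,(4,-1),t)$ for $t=64$ and $t=93$, verifies membership in $\Delta^{*}$, computes the orbits $P_{m,r}(t)$ (whose union is the $28$ listed classes), checks the positivity condition at the double-coset representatives supplied by Lemma \ref{lem2}, computes $\lfloor\nu\rfloor=14$, and verifies the congruences for $n\leq 14$ by machine; Lemma \ref{lem1} then gives all $n\geq 0$. If you want a proof closer to your spirit, the viable route is the one used for $p=17$ in Theorem \ref{thmnew}: prove the self-similarity $\sum_{n\geq 0}b_3(2(29n+35))q^n\equiv\sum_{n\geq 0}b_3(2n)q^{29n}\pmod 2$ by computing $G|T_{29}$ for a suitable eta-quotient and invoking the Sturm bound (Theorem \ref{Sturm}) — a finite Hecke computation, not a quadratic-residue exclusion.
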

Keith and Zanello \cite{Keith2021} also studied $2$-divisibility of $b_{21}(n)$ and proved several congruences for primes $p\equiv 13,17,19,23 \pmod{24}$. To be specific, if $p\equiv 13,17,19,23 \pmod{24}$ is prime, then 
\begin{align*}
b_{21}(4(p^2n+kp-11\cdot 24^{-1})+1)\equiv 0\pmod{2}
\end{align*}
for all $1\leq k<p$, where $24^{-1}$ is taken modulo $p^2$. For example, if $p=13$, then one obtains 
\begin{align*}
b_{21}(4\cdot 13^2n+52k+309)\equiv 0 \pmod{2}
\end{align*}
for all $k=1, 2, \ldots, 12$.
In the following theorem we prove similar type of congruences for the prime $p=29$.
\begin{theorem} \label{thm2}
Let $\beta\in\{8,37,66,95,124,153,182,211,240,269,298,327,356,414,\\ 443,472,501,530,559,588,617,646,675,
704,733,762,791,820\}$. Then for all $n\geq 0$, we have 
$$b_{21}(4(29^2\cdot n+\beta)+1)\equiv 0\pmod 2.$$
\end{theorem}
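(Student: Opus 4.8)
The plan is to prove Theorem \ref{thm2} by the same eta-quotient and modular-forms machinery used for Theorem \ref{thm1}, adapted to the generating function $G_{21}(q)=f_{21}/f_1$. First I would extract the relevant arithmetic progression. Starting from $\sum_n b_{21}(n)q^n = f_{21}/f_1$, I would work modulo $2$ and use the standard fact that $f_1^2\equiv f_2\pmod 2$ (and more generally $f_k^{2^j}\equiv f_{2^jk}$), together with the binary dissection of $1/f_1$, to isolate the subseries $\sum_n b_{21}(4n+1)q^n$ modulo $2$. The aim is to rewrite this subseries as a single eta-quotient (or a small integer combination of eta-quotients) in a variable $q$, i.e.\ to show that, modulo $2$,
\begin{align*}
\sum_{n=0}^{\infty} b_{21}(4n+1)\,q^n \equiv \prod_{k} f_k^{r_k} \pmod 2
\end{align*}
for suitable exponents $r_k$. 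This is the computational heart of the setup and mirrors the dissection lemmas that must already underlie Theorems \ref{thm1} and \ref{thmnew}.

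Next I would reinterpret this eta-quotient as (the reduction of) the $q$-expansion of a modular form. Concretely, setting $q=e^{2\pi i z}$, the product $\prod_k f_k^{r_k}$ corresponds to an eta-quotient $\prod_k \eta(kz)^{r_k}$ up to a power of $q$; I would check the Ligozat conditions (the weight $\tfrac12\sum_k r_k$, the two congruence conditions $\sum_k k\,r_k\equiv 0$ and $\sum_k (N/k)\,r_k\equiv 0 \pmod{24}$, and the order-at-cusps inequalities) to guarantee that, after multiplying by an appropriate power of $q$ and by a suitable power of the Dedekind eta function to clear fractional exponents, the resulting object is a holomorphic modular form on $\Gamma_0(N)$ for an explicit level $N$ (here $N$ will involve the primes $2,3,7$ coming from $21$). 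Because we work modulo $2$, I have freedom to replace $f_k$ by $f_k^{2}$-type substitutes to make all exponents even and the form holomorphic integer-weight; this is exactly the trick that makes Hecke operators available modulo $2$.

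With a genuine modular form $F(z)=\sum_n a(n)q^n$ satisfying $a(n)\equiv b_{21}(4n+1)\pmod 2$ in hand, I would apply the theory of Hecke operators at $p=29$. Since the set of residues $\beta$ in the statement has exactly $28=p-1$ elements and the forbidden residues correspond to $4(p^2 n + \beta)+1$, the claim is precisely that $a(pm+\beta')\equiv 0$ for the relevant shifted residues $\beta'$. I would argue that $F$ lies in a space where the action of $T_p$ modulo $2$ is nilpotent or where the eigenform components all have $a(p)\equiv 0$, forcing the vanishing of all coefficients $a(n)$ with $n$ in the non-square-class progressions modulo $p$. Equivalently, one shows that multiplication by the Hecke operator kills the relevant coefficients because the completed square factor $-11\cdot24^{-1}\pmod{p^2}$ is a quadratic (non)residue condition that lands each $\beta$ outside the image; this is the same mechanism Keith and Zanello used for $p=13$. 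The main obstacle will be Step 1 combined with the cusp-order verification in Step 2: getting the dissection of $f_{21}/f_1$ into a clean eta-quotient modulo $2$ and then checking that the associated eta-quotient is holomorphic at every cusp of the (possibly large) level $N$ so that Serre's theory of modular forms modulo $2$ and the $29$th Hecke operator genuinely apply; the Hecke step itself is then routine once the form and its level are pinned down.
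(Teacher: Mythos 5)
Your Step 1 matches the paper: the even--odd dissection of $f_{21}/f_1$ modulo $2$ (applied twice) yields the clean single eta-quotient
\begin{align*}
\sum_{n=0}^{\infty}b_{21}(4n+1)\,q^n\equiv \frac{f_3^4}{f_1}\pmod 2,
\end{align*}
exactly as in the paper's proof. (Incidentally, after this reduction the only prime involved is $3$, not $2,3,7$; the paper works at level $N=87=3\cdot 29$.) The genuine gap is in your Step 3. The Keith--Zanello Hecke mechanism you invoke works for primes $p\equiv 13,17,19,23\pmod{24}$ precisely because for those residue classes one can prove the self-similarity congruence $F\,|\,T_p\equiv$ (the original series in $q^p$) modulo $2$, which is what forces $a(pm+\beta')\equiv 0$ for the $p-1$ shifted residues. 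But $29\equiv 5\pmod{24}$ lies outside those classes, and no vanishing of the mod-$2$ Hecke eigenvalue at $29$ (nor any nilpotence of $T_{29}$ on the relevant space) is known. The paper is explicit about the analogous situation for $b_3$: Conjecture \ref{conj1} at $p=29$ is stated as open, and the whole point of Theorems \ref{thm1} and \ref{thm2} is to obtain the congruences \emph{without} the Hecke self-similarity. Your closing remark that ``the Hecke step itself is then routine'' is therefore exactly backwards: that step is the unproven heart of the matter, and your proposal as written would stall there.

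What the paper does instead is apply Radu's algorithmic method (Lemmas \ref{lem1} and \ref{lem2}): it verifies that the tuples $(m,M,N,r,t)=(841,3,87,(-1,4),414)$ and $(841,3,87,(-1,4),443)$ lie in $\Delta^{*}$, computes the orbits $P_{m,r}(t)$ (which together give all $28$ values of $\beta$, i.e.\ the residues $8+29k\pmod{841}$ with $k\neq 13$, omitting $385\equiv -11\cdot 24^{-1}\pmod{841}$), checks the positivity condition $p_{m,r}(\gamma_\delta)+p_{r'}^{*}(\gamma_\delta)\geq 0$ at the double-coset representatives from Lemma \ref{lem2}, and then confirms the congruence for the finitely many cases $n\leq \lfloor\nu\rfloor=14$ by computer. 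Lemma \ref{lem1} then upgrades this finite verification to all $n\geq 0$. If you want to salvage your outline, replace the Hecke step with this Radu-type finite-check argument (or, at minimum, with a Sturm-bound verification of an explicit congruence of modular forms, as in the proof of Theorem \ref{thmnew}); either way the conclusion rests on a finite computation rather than on any eigenvalue vanishing at $29$.
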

In addition to the study of Ramanujan-type congruences, it is an interesting problem to study the distribution of the partition function modulo positive integers $M$. To be precise, given an integral power series $F(q):=\sum_{n=0}^{\infty}a(n)q^n$ and $0\leq r<M$, we define
\begin{align*}
\delta_r(F, M; X):=\frac{\#\{n\leq X: a(n)\equiv r \pmod{M}\}}{X}.
\end{align*} 
An integral power series $F$ is called \textit{lacunary modulo $M$} if 
\begin{align*}
\lim _{X\rightarrow \infty}\delta_0(F, M; X)=1,
\end{align*}
that is, almost all of the coefficients of $F$ are divisible by $M$. In a recent paper \cite{Cotron2020}, Cotron et al. proved lacunarity of certain eta-quotients modulo arbitrary powers of primes. We phrase their theorem as follows:
\begin{theorem}\cite[Theorem 1.1]{Cotron2020}\label{Cotron}
Let $G(z)=\frac{\prod_{i=1}^{u}f_{\alpha_i}^{r_i}}{\prod_{i=1}^{t}f_{\beta_i}^{s_i}}$, and $p$ is a prime such that $p^a$ divides $\gcd(\alpha_1,
\alpha_2, \ldots, \alpha_u)$ and 
\begin{align*}
p^a\geq \sqrt{\frac{\sum_{i=1}^{t}\beta_i s_i}{\sum_{i=1}^{u}\frac{r_i}{\alpha_i}}},
\end{align*} 
then $G(z)$ is lacunary modulo $p^j$ for any positive integer $j$.
\end{theorem}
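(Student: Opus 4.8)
The statement is quoted from Cotron et al., so I sketch the strategy one would follow to establish it. The engine is a classical theorem of Serre: a cusp form of positive integer weight on some $\Gamma_0(N)$ with rational integer Fourier coefficients is lacunary modulo every integer $M$, i.e.\ almost all of its coefficients are divisible by $M$. Hence the whole problem reduces to producing, for each fixed $j$, a holomorphic cusp form $H_j$ with integer coefficients satisfying $G(z)\equiv H_j(z)\pmod{p^j}$ as $q$-series; lacunarity of $H_j$ modulo $p^j$ then transfers coefficientwise to $G$, and since $j$ is arbitrary the conclusion follows.

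To manufacture such an $H_j$, I would first record the congruence $f_k^{p^j}\equiv f_{pk}^{\,p^{j-1}}\pmod{p^j}$, valid for every $k\geq 1$; it is proved by lifting $(1-x)^p\equiv 1-x^p\pmod p$ one power of $p$ at a time, noting that in $(f_{pk}+pB)^p$ every term past the leading one carries a factor $p^2$. Since $f_{pk}^{\,p^{j-1}}$ is a unit in $\mathbb{Z}[[q]]$, this gives the eta-quotient relation $\frac{f_k^{p^j}}{f_{pk}^{\,p^{j-1}}}\equiv 1\pmod{p^j}$ for every $k$. Consequently, multiplying $G$ by any product $\prod_k\left(\frac{f_k^{p^j}}{f_{pk}^{\,p^{j-1}}}\right)^{m_k}$ with $m_k\geq 0$ leaves $G$ unchanged modulo $p^j$ while altering its behaviour at the cusps; each such factor adds the positive weight $\tfrac12 p^{j-1}(p-1)m_k$. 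The plan is to choose the multiplicities $m_k$ so that the resulting eta-quotient $H_j$ (after restoring the $q^{1/24}$-normalization converting the $f_k$ into $\eta(\delta z)$) has positive integer weight, satisfies the Ligozat congruence conditions $\sum_\delta \delta\,r_\delta\equiv\sum_\delta (N/\delta)r_\delta\equiv 0\pmod{24}$ on a suitable $\Gamma_0(N)$, and vanishes at every cusp; the weight and the divisibility conditions are arranged by routine bookkeeping.

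The crux is the cusp analysis. By Ligozat's formula the order of $\prod_\delta\eta(\delta z)^{r_\delta}$ at a cusp $c/d$ of $\Gamma_0(N)$ is a fixed positive multiple of $\sum_\delta \gcd(d,\delta)^2 r_\delta/\delta$, so I would need this sum to be strictly positive at every $d\mid N$ for $H_j$. The negative contributions arise only from the denominator $\prod_i f_{\beta_i}^{s_i}$ and are controlled at the cusp $\infty$ by $\sum_i\beta_i s_i$; the positive numerator contributions at the cusp $0$ are governed by $\sum_i r_i/\alpha_i$. The hypothesis $p^a\mid\gcd(\alpha_1,\dots,\alpha_u)$ forces the numerator $\prod_i f_{\alpha_i}^{r_i}$ to be a power series in $q^{p^a}$, which concentrates its cuspidal order precisely where it must overcome the denominator's poles. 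Carrying out the worst-case estimate of $\sum_\delta \gcd(d,\delta)^2 r_\delta/\delta$ over all $d\mid N$—balancing the $\gcd(d,\delta)^2$ weights against the $p^a$-divisibility of the $\alpha_i$—reduces the required positivity to exactly $p^a\geq\sqrt{\big(\sum_i\beta_i s_i\big)/\big(\sum_i r_i/\alpha_i\big)}$. This estimate is the main obstacle and the place where the specific numerical condition of the theorem is forced.

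With $H_j$ secured as a holomorphic cusp form of positive integer weight and integer coefficients, Serre's theorem gives $\delta_0(H_j,p^j;X)\to 1$, and the congruence $G\equiv H_j\pmod{p^j}$ yields $\delta_0(G,p^j;X)\to 1$. As $j$ was arbitrary, $G(z)$ is lacunary modulo $p^j$ for every positive integer $j$, which is the assertion of the theorem.
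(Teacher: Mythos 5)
This statement is imported verbatim from Cotron et al.\ \cite[Theorem 1.1]{Cotron2020}; the paper under review gives no proof of it, so your sketch can only be compared with the cited source's argument and with the paper's own analogous computation in the proof of Theorem \ref{thm3}. On that comparison your outline is essentially the right one: the lifting congruence $f_k^{p^j}\equiv f_{pk}^{p^{j-1}}\pmod{p^j}$, multiplication of $G$ by such unit factors to repair the behaviour at the cusps without changing the series modulo $p^j$, the Ligozat-type order computation (Theorem \ref{thm_ono2}) with a worst-case estimate over divisors $d\mid N$ that produces exactly the inequality $p^{2a}\sum_i r_i/\alpha_i\geq\sum_i\beta_i s_i$, and a final appeal to Serre's density theorem. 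This is precisely the mechanism the authors themselves run, for $p=2$, in the proof of Theorem \ref{thm3}, where $A(z)=\eta^2(54z)/\eta(108z)$ plays the role of your unit factor and $A^{2^k}\equiv 1\pmod{2^{k+1}}$.

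The one substantive flaw is your insistence on producing a \emph{cusp} form $H_j$, with strictly positive order at every cusp, together with the attribution to Serre of a lacunarity theorem for cusp forms only. Serre's result, as quoted in this paper (Theorem \ref{Serre}), applies to any holomorphic modular form of positive integer weight with algebraic integer coefficients; cuspidality is not needed, and nonnegative order at each cusp suffices. This matters because the hypothesis of the theorem is the non-strict inequality $p^a\geq\sqrt{\bigl(\sum_i\beta_i s_i\bigr)/\bigl(\sum_i r_i/\alpha_i\bigr)}$: in the equality case the worst cusp order is exactly zero and no choice of your multiplicities $m_k$ can force strict positivity there, so your reduction as stated would fail precisely at the boundary the theorem is designed to include. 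The paper's own computation illustrates this concretely: in the proof of Theorem \ref{thm3}, Case (iii) gives order exactly $0$ at the cusps with $d=36$, so $B_k(z)$ is holomorphic but not cuspidal, and the argument goes through via Theorem \ref{Serre} all the same. Replace ``cusp form, order strictly positive'' by ``holomorphic modular form, order nonnegative'' throughout and your sketch aligns with the actual proof strategy of \cite{Cotron2020}.
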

Keith and Zanello \cite{Keith2021} studied lacunarity of the functions $b_3(2n)$, $b_{21}(4n)$, $b_{21}(4n+1)$ and $b_{21}(8n+3)$ modulo $2$ using the technique developed by Landau \cite{Landau1908}. In the following theorem, we prove that $b_9(2n+1)$ is almost always divisible by arbitrary powers of $2$.
\begin{theorem} \label{thm3}
The series $\sum_{n=0}^{\infty}b_9(2n+1)q^n$ is lacunary modulo $2^k$ for any positive integer $k$.
\end{theorem}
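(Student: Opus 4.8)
The plan is to express the generating function for $b_9(2n+1)$ as an eta-quotient (or a constant multiple of one after extracting a power of $q$) and then apply Theorem \ref{Cotron} directly. First I would start from \eqref{gen_fun} with $\ell=9$, so that $\sum_{n\geq 0}b_9(n)q^n=f_9/f_1$. To isolate the odd-indexed coefficients, I would work modulo $2$ using the standard fact that $f_1\equiv f_1$ but, more usefully, that squaring behaves well: since $(1-q^j)^2\equiv 1-q^{2j}\pmod 2$, we have $f_k^2\equiv f_{2k}\pmod 2$ for every $k$. Applying this to clear denominators, I would multiply numerator and denominator suitably so that $f_9/f_1\equiv f_9 f_1/f_1^2\equiv f_9 f_1/f_2\pmod 2$, and then try to dissect the resulting series into even and odd parts in $q$.

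The key step is the $2$-dissection. I would write $f_9 f_1 = f_9\cdot f_1$ and use a known $2$-dissection of $f_1$ (for instance $f_1 = f_6 f_9 f_{36}/(f_{12}f_{18}) - q f_{18}f_{12}^{\,2}\cdots$, or more simply the classical identity separating $f_1$ into its even and odd parts in terms of $f_2$-based quotients) to extract the terms of $f_9 f_1/f_2$ supported on odd powers of $q$. After collecting the odd part and replacing $q^2$ by $q$, the series $\sum_{n\geq 0}b_9(2n+1)q^n$ should emerge, modulo $2$, as a single eta-quotient $\prod_i f_{\alpha_i}^{r_i}/\prod_i f_{\beta_i}^{s_i}$ (up to an overall power of $q$ that does not affect lacunarity). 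I would record the exact exponents $\alpha_i,\beta_i,r_i,s_i$ from this computation.

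With the eta-quotient in hand, I would verify the hypothesis of Theorem \ref{Cotron} for the prime $p=2$. Specifically, I need to check that $2^a$ divides $\gcd$ of the numerator levels $\alpha_i$ for some $a\geq 1$, and that the inequality $2^a\geq\sqrt{\sum_i\beta_i s_i \big/ \sum_i (r_i/\alpha_i)}$ holds. Because the construction naturally produces numerator indices that are multiples of a small power of $2$ (the dissection forces even levels), choosing $a$ so that $2^a$ divides all $\alpha_i$ should be straightforward, and I expect the numerical inequality to hold with room to spare. Once both conditions are checked, Theorem \ref{Cotron} gives lacunarity modulo $2^j$ for every positive integer $j$, which is exactly the assertion of Theorem \ref{thm3}.

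The main obstacle I anticipate is the $2$-dissection itself: obtaining a clean closed-form eta-quotient for the odd part of $f_9 f_1/f_2$ modulo $2$, rather than a sum of several eta-quotients. If the odd part does not collapse to a single eta-quotient, Theorem \ref{Cotron} does not apply term-by-term in an obvious way, so I would need either a judicious algebraic identity among the $f_k$ to force the collapse, or a preliminary reduction (such as further exploiting $f_k^2\equiv f_{2k}\pmod 2$ and the $3$-divisibility structure coming from $\ell=9$) to rewrite the expression in the required product form. Getting this single-quotient representation is the crux; after that, the application of Theorem \ref{Cotron} is a routine verification of the divisibility and inequality conditions.
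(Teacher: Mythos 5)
There is a genuine gap, and it is fatal to the plan in two distinct ways. First, your dissection is carried out modulo $2$ (you invoke $f_k^2\equiv f_{2k}\pmod 2$ throughout), so at the end you would have an eta-quotient that agrees with $\sum_{n\ge 0}b_9(2n+1)q^n$ only modulo $2$. Even if Theorem \ref{Cotron} applied to that quotient and gave lacunarity modulo $2^j$ for all $j$, this would transfer to $b_9(2n+1)$ only modulo $2$: a congruence of series mod $2$ says nothing about divisibility of coefficients by $4, 8,\dots$. To get the full statement of Theorem \ref{thm3} via Theorem \ref{Cotron} you would need an \emph{exact} eta-quotient identity, and one exists: the paper's Lemma gives $\sum_{n=0}^{\infty}b_9(2n+1)q^n=\frac{f_2^2f_3f_{18}}{f_1^3f_6}$ (from the Xia--Yao $2$-dissection of $f_9/f_1$). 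But this is where the second problem appears: Theorem \ref{Cotron} requires a power $2^a$ ($a\ge 1$) to divide the gcd of the numerator indices, and here $\gcd(2,3,18)=1$. Mod-$2$ rewritings do not rescue this. For instance, reducing to $\frac{f_1f_{18}}{f_3}$ and then to $\frac{f_2f_{18}}{f_1f_3}$ gives numerator gcd $2$, but the inequality fails:
\begin{align*}
\sqrt{\frac{\sum_i \beta_i s_i}{\sum_i r_i/\alpha_i}}=\sqrt{\frac{1+3}{\tfrac12+\tfrac1{18}}}=\frac{6}{\sqrt 5}\approx 2.68>2,
\end{align*}
and pushing the numerator indices to multiples of $4$ by further squaring only makes the right-hand side worse while, again, validity would anyway be only mod $2$. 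This is precisely why the authors remark that they ``couldn't find a similar proof for Theorem \ref{thm3}'' by the Landau/Cotron route used for Theorem \ref{thm4}.

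The paper's actual argument is structurally different and worth internalizing: starting from the exact identity above, multiply by $A^{2^k}(z)$ where $A(z)=\eta^2(54z)/\eta(108z)$, using $A^{2^k}\equiv 1\pmod{2^{k+1}}$. The resulting $B_k(z)$ is, for each $k\ge 6$, a genuine holomorphic modular form in $M_{2^{k-1}}(\Gamma_0(324),\chi_1)$ (holomorphy at all cusps is checked via Theorem \ref{thm_ono2}), and it is congruent modulo $2^{k+1}$ to $\sum_n b_9(2n+1)q^{3n+2}$. Then Serre's density theorem (Theorem \ref{Serre}) --- not Theorem \ref{Cotron} --- gives that the coefficients of $B_k$ are almost always divisible by $2^k$, and the congruence transfers this to $b_9(2n+1)$ at the modulus $2^k$, with a separate form $B_k$ for each $k$. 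If you want to salvage your approach, the lesson is that each modulus $2^k$ needs its own exact-up-to-$2^{k+1}$ modular object; a single mod-$2$ eta-quotient cannot carry the whole tower.
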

Keith and Zanello \cite{Keith2021} derived several congruences for the partition function $b_{9}(n)$ modulo $2$ using the theory of Hecke operators. In the following theorem we prove that $b_9(4n)$ is almost always divisible by $2$.
\begin{theorem} \label{thm4}
	The series $\sum_{n=0}^{\infty}b_9(4n)q^n$ is lacunary modulo $2$.
\end{theorem}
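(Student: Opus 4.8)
The plan is to isolate the progression $4n$ inside the generating function $G_9(q)=f_9/f_1$ and to recognize the resulting series, modulo $2$, as an eta-quotient to which Theorem \ref{Cotron} applies with $p=2$. A convenient starting point is the reduction $G_9\equiv f_1 f_9/f_2\pmod 2$, coming from $f_1^2\equiv f_2\pmod 2$; after this one only needs to extract from $f_1 f_9$ the terms whose exponent is divisible by $4$, since $f_2$ is a power series in $q^2$ and so respects the relevant residue classes.

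First I would carry out two successive $2$-dissections. Writing $G_9(q)=\sum_n b_9(n)q^n=E_0(q^2)+qE_1(q^2)$, the even part $E_0(q)=\sum_n b_9(2n)q^n$ is extracted from a $2$-dissection of $f_9/f_1$; the subtlety is that $f_9$ is supported at the odd level $9$, so its pentagonal expansion must itself be split into the pieces carried by even and odd exponents before being combined with a $2$-dissection of $1/f_1$. Having isolated $\sum_n b_9(2n)q^n$, I would repeat the procedure once more to peel off the subprogression $4n$ and obtain a closed formula for $\sum_n b_9(4n)q^n$.

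Second, I would reduce this formula modulo $2$, using $f_k^2\equiv f_{2k}\pmod 2$ repeatedly to collapse it (possibly after combining two dissection terms) to a single eta-quotient
\[
\sum_{n=0}^{\infty}b_9(4n)q^n\equiv \frac{\prod_i f_{\alpha_i}^{r_i}}{\prod_i f_{\beta_i}^{s_i}}\pmod 2,
\]
arranged so that every numerator level $\alpha_i$ is even. It is exactly this level-doubling that limits the conclusion to modulus $2$ rather than $2^k$ as in Theorem \ref{thm3}: $f_k^2\equiv f_{2k}$ is a congruence modulo $2$, not an identity, whereas in Theorem \ref{thm3} the odd part $\sum_n b_9(2n+1)q^n$ is expected to equal an eta-quotient exactly, so Cotron's theorem there delivers all powers of $2$ at once.

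Finally I would check the two hypotheses of Theorem \ref{Cotron} with $p=2$ and $a=1$: that $2\mid\gcd(\alpha_1,\dots,\alpha_u)$, which holds by construction, and the numerical inequality $2\geq\sqrt{\bigl(\sum_i\beta_i s_i\bigr)/\bigl(\sum_i r_i/\alpha_i\bigr)}$; the theorem then yields lacunarity modulo $2$ of the right-hand side, which the displayed congruence transfers to $\sum_n b_9(4n)q^n$. The main obstacle is the first step: because $9$ is odd, the two $2$-dissections of $f_9/f_1$ do not factor through a simple change of level, and the real work is in tracking which theta components land in the progression $4n$ and in forcing the answer into a genuine eta-quotient rather than an unwieldy sum. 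A secondary check is the Cotron inequality itself; if the naive reduction narrowly fails it, one can trade numerator against denominator levels via $f_k^2\equiv f_{2k}$ until the criterion is satisfied.
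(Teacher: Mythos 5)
Your first step is sound and in fact reproduces the paper's lemma: two successive $2$-dissections of $f_9/f_1$ (the paper uses the Xia--Yao dissection of $f_9/f_1$ and the dissection of $f_1^3/f_3$) yield exactly $\sum_{n\geq 0}b_9(4n)q^n\equiv \frac{f_3^7}{f_1f_9^2}\pmod 2$, which is \eqref{lem1.2}. The genuine gap is your second step: this series cannot be collapsed, modulo $2$, into a \emph{single} eta-quotient to which Theorem \ref{Cotron} applies with $p=2$. Extracting all squares via $f_k^2\equiv f_{2k}$ leaves $\frac{f_3f_6^3}{f_1f_{18}}$, in which $f_3$ and $f_1$ occur to odd exponents; an odd-exponent factor at an odd level can never be removed by square-trading. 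The cheapest way to force an even numerator is $\frac{f_3}{f_1}=\frac{f_3^2}{f_1f_3}\equiv\frac{f_6}{f_1f_3}$, giving $\frac{f_6^4}{f_1f_3f_{18}}$; but then Cotron's inequality demands $2\geq\sqrt{(1+3+18)/(4/6)}=\sqrt{33}$, which fails badly. Nor does your fallback of trading numerator against denominator help: multiplying numerator and denominator by a common even-level factor $f_{2m}^t$ adds $t/(2m)$ to $\sum_i r_i/\alpha_i$ and $2mt$ to $\sum_i\beta_i s_i$, a marginal ratio of $4m^2\geq 4$, so starting from $33$ the ratio stays strictly above $4$ and the criterion $2\geq\sqrt{\cdot}$ is never met. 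This failure is structural, and it is precisely why the paper does not use Cotron alone.

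The missing idea is an \emph{additive} splitting rather than a multiplicative rewriting: the paper invokes $f_1^3\equiv f_3+qf_9^3\pmod 2$, i.e.\ $\frac{f_3}{f_1}\equiv f_1^2+q\frac{f_9^3}{f_1}$, to break the series into the two summands of \eqref{newthm4.02}, namely $\frac{f_1^2f_3^6}{f_9^2}+q\frac{f_3^6f_9^2}{f_1}$. The second summand does collapse to even numerator levels, $\frac{f_3^6f_9^2}{f_1}\equiv\frac{f_6^3f_{18}}{f_1}$, and there Cotron works: $\gcd(6,18)=6$ and $\sqrt{1/(3/6+1/18)}=\sqrt{9/5}\leq 2$. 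The first summand is exactly where Cotron fails, and the paper instead recognizes it modulo $2$ as a product of two theta series with quadratic exponents, using $f_1^3/f_3\equiv 1+\sum_{n\in\mathbb{Z}}q^{(3n-1)^2}$ (rescaled and squared) together with the squared pentagonal number theorem, and concludes by Landau's Lemma \ref{Landau}. So the correct proof is a hybrid Landau--Cotron argument, not Cotron alone. A side remark: your explanation of why Theorem \ref{thm3} gives all powers of $2$ is also off --- the exact identity $\sum_{n\geq0}b_9(2n+1)q^n=\frac{f_2^2f_3f_{18}}{f_1^3f_6}$ has numerator levels with $\gcd(2,3,18)=1$, so Cotron does not apply there either; the paper proves Theorem \ref{thm3} via Serre's density theorem for modular forms.
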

We prove Theorem \ref{thm4} using the approach of Landau \cite{Landau1908} and Theorem \ref{Cotron}. However, we couldn't find a similar proof for Theorem \ref{thm3}. We use a density result of Serre to prove Theorem \ref{thm3}. 
\section{Preliminaries}
We recall some definitions and basic facts on modular forms. For more details, see for example \cite{koblitz1993, ono2004}. We first define the matrix groups 
\begin{align*}
\text{SL}_2(\mathbb{Z}) & :=\left\{\begin{bmatrix}
a  &  b \\
c  &  d      
\end{bmatrix}: a, b, c, d \in \mathbb{Z}, ad-bc=1
\right\},\\
\Gamma_{\infty} & :=\left\{
\begin{bmatrix}
1  &  n \\
0  &  1      
\end{bmatrix}: n\in \mathbb{Z}  \right\},\\
\Gamma_{0}(N) & :=\left\{
\begin{bmatrix}
a  &  b \\
c  &  d      
\end{bmatrix} \in \text{SL}_2(\mathbb{Z}) : c\equiv 0\pmod N \right\},
\end{align*}
\begin{align*}
\Gamma_{1}(N) & :=\left\{
\begin{bmatrix}
a  &  b \\
c  &  d      
\end{bmatrix} \in \Gamma_0(N) : a\equiv d\equiv 1\pmod N \right\},
\end{align*}
and 
\begin{align*}\Gamma(N) & :=\left\{
\begin{bmatrix}
a  &  b \\
c  &  d      
\end{bmatrix} \in \text{SL}_2(\mathbb{Z}) : a\equiv d\equiv 1\pmod N, ~\text{and}~ b\equiv c\equiv 0\pmod N\right\},
\end{align*}
where $N$ is a positive integer. A subgroup $\Gamma$ of $\text{SL}_2(\mathbb{Z})$ is called a congruence subgroup if $\Gamma(N)\subseteq \Gamma$ for some $N$. The smallest $N$ such that $\Gamma(N)\subseteq \Gamma$
is called the level of $\Gamma$. For example, $\Gamma_0(N)$ and $\Gamma_1(N)$
are congruence subgroups of level $N$. 
\par Let $\mathbb{H}:=\{z\in \mathbb{C}: \text{Im}(z)>0\}$ be the upper half of the complex plane. The group $$\text{GL}_2^{+}(\mathbb{R})=\left\{\begin{bmatrix}
a  &  b \\
c  &  d      
\end{bmatrix}: a, b, c, d\in \mathbb{R}~\text{and}~ad-bc>0\right\}$$ acts on $\mathbb{H}$ by $\begin{bmatrix}
a  &  b \\
c  &  d      
\end{bmatrix} z=\displaystyle \frac{az+b}{cz+d}$.  
We identify $\infty$ with $\displaystyle\frac{1}{0}$ and define $\begin{bmatrix}
a  &  b \\
c  &  d      
\end{bmatrix} \displaystyle\frac{r}{s}=\displaystyle \frac{ar+bs}{cr+ds}$, where $\displaystyle\frac{r}{s}\in \mathbb{Q}\cup\{\infty\}$.
This gives an action of $\text{GL}_2^{+}(\mathbb{R})$ on the extended upper half-plane $\mathbb{H}^{\ast}=\mathbb{H}\cup\mathbb{Q}\cup\{\infty\}$. 
Suppose that $\Gamma$ is a congruence subgroup of $\text{SL}_2(\mathbb{Z})$. A cusp of $\Gamma$ is an equivalence class in $\mathbb{P}^1=\mathbb{Q}\cup\{\infty\}$ under the action of $\Gamma$.
\par The group $\text{GL}_2^{+}(\mathbb{R})$ also acts on functions $f: \mathbb{H}\rightarrow \mathbb{C}$. In particular, suppose that $\gamma=\begin{bmatrix}
a  &  b \\
c  &  d      
\end{bmatrix}\in \text{GL}_2^{+}(\mathbb{R})$. If $f(z)$ is a meromorphic function on $\mathbb{H}$ and $\ell$ is an integer, then define the slash operator $|_{\ell}$ by 
$$(f|_{\ell}\gamma)(z):=(\text{det}~{\gamma})^{\ell/2}(cz+d)^{-\ell}f(\gamma z).$$
\begin{definition}
	Let $\Gamma$ be a congruence subgroup of level $N$. A holomorphic function $f: \mathbb{H}\rightarrow \mathbb{C}$ is called a modular form with integer weight $\ell$ on $\Gamma$ if the following hold:
	\begin{enumerate}
		\item We have $$f\left(\displaystyle \frac{az+b}{cz+d}\right)=(cz+d)^{\ell}f(z)$$ for all $z\in \mathbb{H}$ and all $\begin{bmatrix}
		a  &  b \\
		c  &  d      
		\end{bmatrix} \in \Gamma$.
		\item If $\gamma\in \text{SL}_2(\mathbb{Z})$, then $(f|_{\ell}\gamma)(z)$ has a Fourier expansion of the form $$(f|_{\ell}\gamma)(z)=\displaystyle\sum_{n\geq 0}a_{\gamma}(n)q_N^n,$$
		where $q_N:=e^{2\pi iz/N}$.
	\end{enumerate}
\end{definition}
For a positive integer $\ell$, the complex vector space of modular forms of weight $\ell$ with respect to a congruence subgroup $\Gamma$ is denoted by $M_{\ell}(\Gamma)$.
\begin{definition}\cite[Definition 1.15]{ono2004}
	If $\chi$ is a Dirichlet character modulo $N$, then we say that a modular form $f\in M_{\ell}(\Gamma_1(N))$ has Nebentypus character $\chi$ if
	$$f\left( \frac{az+b}{cz+d}\right)=\chi(d)(cz+d)^{\ell}f(z)$$ for all $z\in \mathbb{H}$ and all $\begin{bmatrix}
	a  &  b \\
	c  &  d      
	\end{bmatrix} \in \Gamma_0(N)$. The space of such modular forms is denoted by $M_{\ell}(\Gamma_0(N), \chi)$. 
\end{definition}
In this paper, the relevant modular forms are those that arise from eta-quotients. Recall that the Dedekind eta-function $\eta(z)$ is defined by
\begin{align*}
\eta(z):=q^{1/24}(q;q)_{\infty}=q^{1/24}\prod_{n=1}^{\infty}(1-q^n),
\end{align*}
where $q:=e^{2\pi iz}$ and $z\in \mathbb{H}$. A function $f(z)$ is called an eta-quotient if it is of the form
\begin{align*}
f(z)=\prod_{\delta\mid N}\eta(\delta z)^{r_\delta},
\end{align*}
where $N$ is a positive integer and $r_{\delta}$ is an integer. We now recall two theorems from \cite[p. 18]{ono2004} which will be used to prove our results.
\begin{theorem}\cite[Theorem 1.64]{ono2004}\label{thm_ono1} If $f(z)=\prod_{\delta\mid N}\eta(\delta z)^{r_\delta}$ 
	is an eta-quotient such that $\ell=\frac{1}{2}\sum_{\delta\mid N}r_{\delta}\in \mathbb{Z}$, 
	$$\sum_{\delta\mid N} \delta r_{\delta}\equiv 0 \pmod{24}$$ and
	$$\sum_{\delta\mid N} \frac{N}{\delta}r_{\delta}\equiv 0 \pmod{24},$$
	then $f(z)$ satisfies $$f\left( \frac{az+b}{cz+d}\right)=\chi(d)(cz+d)^{\ell}f(z)$$
	for every  $\begin{bmatrix}
	a  &  b \\
	c  &  d      
	\end{bmatrix} \in \Gamma_0(N)$. Here the character $\chi$ is defined by $\chi(d):=\left(\frac{(-1)^{\ell} s}{d}\right)$, where $s:= \prod_{\delta\mid N}\delta^{r_{\delta}}$. 
\end{theorem}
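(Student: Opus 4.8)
The plan is to reduce the entire statement to the classical transformation law for the Dedekind eta-function under the full modular group and then to bookkeep the resulting automorphy factors and roots of unity. Recall that for $\gamma=\begin{bmatrix} a & b \\ c & d\end{bmatrix}\in\mathrm{SL}_2(\mathbb{Z})$ with $c>0$ one has
\begin{align*}
\eta(\gamma z)=\varepsilon(a,b,c,d)\,\{-i(cz+d)\}^{1/2}\,\eta(z),
\end{align*}
where $\{-i(cz+d)\}^{1/2}$ is the principal branch and $\varepsilon(a,b,c,d)$ is a $24$-th root of unity expressible, up to the standard sign conventions, through the Dedekind sum $s(d,c)$, roughly as $\varepsilon(a,b,c,d)=\exp\!\big(\pi i((a+d)/(12c)-s(d,c)-\tfrac14)\big)$. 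First I would record this formula together with the elementary relations $\eta(z+1)=e^{\pi i/12}\eta(z)$ and $\eta(-1/z)=\sqrt{-iz}\,\eta(z)$, which dispose of the degenerate case $c=0$ by translation.

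Next, fix $\gamma=\begin{bmatrix}a&b\\c&d\end{bmatrix}\in\Gamma_0(N)$, so that $N\mid c$, and compute $\eta(\delta\gamma z)$ for each $\delta\mid N$. The key algebraic observation is that
\begin{align*}
\delta\,\frac{az+b}{cz+d}=\frac{a(\delta z)+b\delta}{(c/\delta)(\delta z)+d},
\end{align*}
so that $\eta(\delta\gamma z)=\eta(\gamma_\delta\,w)$ with $w=\delta z$ and $\gamma_\delta=\begin{bmatrix}a&b\delta\\ c/\delta&d\end{bmatrix}$. Since $\delta\mid N\mid c$ the entry $c/\delta$ is an integer, and $\det\gamma_\delta=ad-bc=1$, so $\gamma_\delta\in\mathrm{SL}_2(\mathbb{Z})$. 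Applying the eta transformation law to $\gamma_\delta$ and using $(c/\delta)w+d=cz+d$ gives $\eta(\delta\gamma z)=\varepsilon(a,b\delta,c/\delta,d)\,\{-i(cz+d)\}^{1/2}\,\eta(\delta z)$. Taking the product over $\delta\mid N$ then yields
\begin{align*}
f(\gamma z)=\Bigg(\prod_{\delta\mid N}\varepsilon(a,b\delta,c/\delta,d)^{r_\delta}\Bigg)\,\{-i(cz+d)\}^{\ell}\,f(z),
\end{align*}
where $\ell=\tfrac12\sum_{\delta\mid N}r_\delta$. Because $\ell\in\mathbb{Z}$ by hypothesis, $\{-i(cz+d)\}^{\ell}=(-i)^{\ell}(cz+d)^{\ell}$ is an honest integer power of $cz+d$; the factor $(-i)^{\ell}$ merges with the root-of-unity product, and only the resulting aggregate multiplier remains to be identified.

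The main work — and the step I expect to be the genuine obstacle — is to show that this aggregate multiplier equals $\chi(d)=\left(\frac{(-1)^{\ell}s}{d}\right)$, $s=\prod_{\delta\mid N}\delta^{r_\delta}$, precisely under the two congruence conditions $\sum_{\delta\mid N}\delta r_\delta\equiv0\pmod{24}$ and $\sum_{\delta\mid N}(N/\delta)r_\delta\equiv0\pmod{24}$. I would insert the Dedekind-sum expression for each $\varepsilon(a,b\delta,c/\delta,d)$ and simplify the total exponent using the reciprocity law for Dedekind sums together with their elementary congruence properties. The first congruence kills the $b$-dependent and $(a+d)/(12c)$-type contributions modulo the relevant power of $e^{\pi i/12}$, while the second governs the behaviour at the cusp $0$ (equivalently, it is what is needed after conjugating by $\begin{bmatrix}0&-1\\1&0\end{bmatrix}$); together they force the fractional exponents to collapse and leave a pure sign. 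Matching that sign to the quadratic residue symbol $\left(\frac{(-1)^{\ell}s}{d}\right)$ via quadratic reciprocity and the multiplicativity of $\left(\frac{\cdot}{d}\right)$ — organizing the computation by first treating $d$ a prime power and then extending by multiplicativity, in the classical style of Newman and Ligozat — is the delicate part, whereas the two reductions above are essentially formal.
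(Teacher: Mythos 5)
You cannot be compared against an internal argument here, because the paper contains none: this statement is imported verbatim as Theorem 1.64 of Ono's book \cite{ono2004}, where it is a classical result going back to Newman, Gordon--Hughes and Ligozat, and the present authors use it strictly as a black box. Judged on its own terms, your formal reductions are correct and are exactly the classical route. The identity $\delta\gamma z=\gamma_\delta(\delta z)$ with $\gamma_\delta=\begin{bmatrix} a & b\delta\\ c/\delta & d\end{bmatrix}$, which lies in $\mathrm{SL}_2(\mathbb{Z})$ because $\delta\mid N\mid c$ and $\det\gamma_\delta=ad-bc=1$, is right; each factor contributes the same automorphy factor $\{-i(cz+d)\}^{1/2}$ since $(c/\delta)(\delta z)+d=cz+d$, so these telescope to $\{-i(cz+d)\}^{\ell}$, single-valued because $\ell\in\mathbb{Z}$; and the degenerate case $c=0$ does follow from $\eta(z+1)=e^{\pi i/12}\eta(z)$ together with the hypothesis $\sum_{\delta\mid N}\delta r_\delta\equiv 0\pmod{24}$. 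One omission you should repair: you treat only $c>0$. For $c<0$ one passes to $-\gamma$, and the resulting discrepancy $(-1)^{\ell}$ between $(cz+d)^{\ell}$ and $(-cz-d)^{\ell}$ must be checked to be absorbed by $\chi(-d)$ versus $\chi(d)$; this is precisely where the factor $(-1)^{\ell}$ inside the Jacobi symbol earns its keep, so it cannot be waved off.

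The substantive gap is that the entire content of the theorem --- the evaluation of $(-i)^{\ell}\prod_{\delta\mid N}\varepsilon(a,b\delta,c/\delta,d)^{r_\delta}$ as the Jacobi symbol $\left(\frac{(-1)^{\ell}s}{d}\right)$ under the two congruence hypotheses --- is stated as a plan (Dedekind reciprocity, congruence properties of $s(d,c)$, reduction to prime-power $d$, quadratic reciprocity) but not carried out. You have correctly located the obstacle, and your plan is the one Newman and Ligozat actually execute, but as written this is a roadmap rather than a proof: the collapse of the fractional Dedekind-sum exponents to a pure sign, using \emph{both} hypotheses, is exactly the part that cannot be taken on faith. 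A concrete warning for when you execute it: your quoted multiplier $\varepsilon(a,b,c,d)=\exp\bigl(\pi i((a+d)/(12c)-s(d,c)-\tfrac14)\bigr)$ mixes conventions. With the branch $\{-i(cz+d)\}^{1/2}$ the correct exponent has no $-\tfrac14$ (the $-\tfrac14$ belongs to the convention using $(cz+d)^{1/2}$), and keeping both double-counts a factor $e^{-\pi i/4}$ for each divisor $\delta$, which would corrupt the final sign you are trying to identify with $\chi(d)$.
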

Suppose that $f$ is an eta-quotient satisfying the conditions of Theorem \ref{thm_ono1} and that the associated weight $\ell$ is a positive integer. If $f(z)$ is holomorphic at all of the cusps of $\Gamma_0(N)$, then $f(z)\in M_{\ell}(\Gamma_0(N), \chi)$. The following theorem gives the necessary criterion for determining orders of an eta-quotient at cusps.
\begin{theorem}\cite[Theorem 1.65]{ono2004}\label{thm_ono2}
	Let $c, d$ and $N$ be positive integers with $d\mid N$ and $\gcd(c, d)=1$. If $f$ is an eta-quotient satisfying the conditions of Theorem \ref{thm_ono1} for $N$, then the 
	order of vanishing of $f(z)$ at the cusp $\frac{c}{d}$ 
	is $$\frac{N}{24}\sum_{\delta\mid N}\frac{\gcd(d,\delta)^2r_{\delta}}{\gcd(d,\frac{N}{d})d\delta}.$$
\end{theorem}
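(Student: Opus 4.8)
The plan is to reduce the computation to the behaviour of a single factor $\eta(\delta z)$ at the cusp, exploiting that the order of vanishing is additive over the factors of the eta-quotient $f(z)=\prod_{\delta\mid N}\eta(\delta z)^{r_\delta}$. First I would fix a matrix $\sigma=\begin{bmatrix} c & \alpha\\ d & \beta\end{bmatrix}\in\mathrm{SL}_2(\mathbb{Z})$, which exists because $\gcd(c,d)=1$, sending $\infty$ to the cusp $c/d$; by definition the order of vanishing of $f$ at $c/d$ is the order of vanishing of $(f|_{\ell}\sigma)(z)$ expanded in the local uniformizer $q_h=e^{2\pi i z/h}$, where $h$ is the width of the cusp $c/d$ for $\Gamma_0(N)$. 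Since the automorphy factor $(\det\sigma)^{\ell/2}(dz+\beta)^{-\ell}$ attached to the slash operator is nonzero and grows only polynomially as $z\to i\infty$, it does not affect the leading exponent, so it suffices to extract the leading power of $e^{2\pi i z}$ in $f(\sigma z)=\prod_{\delta\mid N}\eta(\delta\sigma z)^{r_{\delta}}$.

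The key step is a matrix factorization. For each $\delta\mid N$ the Möbius action gives $\delta(\sigma z)=Mz$, where $M=\begin{bmatrix}\delta c & \delta\alpha\\ d & \beta\end{bmatrix}$ has determinant $\delta$. I would perform a Smith-type reduction $M=L\,B$ with $L\in\mathrm{SL}_2(\mathbb{Z})$ and $B=\begin{bmatrix} a_1 & b_1\\ 0 & d_1\end{bmatrix}$ upper triangular, $a_1 d_1=\delta$. Comparing first columns forces $a_1=\gcd(\delta c,d)$, and the hypothesis $\gcd(c,d)=1$ collapses this to $a_1=\gcd(\delta,d)$, whence $d_1=\delta/\gcd(\delta,d)$. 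Applying the Dedekind eta transformation formula $\eta(Lw)=\varepsilon(L)(c_L w+d_L)^{1/2}\eta(w)$ with $w=Bz=(a_1 z+b_1)/d_1$, and using $\eta(w)=e^{2\pi i w/24}\prod_{n\ge 1}(1-e^{2\pi i n w})$, the leading behaviour as $z\to i\infty$ is $e^{2\pi i a_1 z/(24 d_1)}$ up to a nonzero constant and the polynomially bounded factor $(c_L w+d_L)^{1/2}$. Hence $\eta(\delta\sigma z)$ vanishes to order $\frac{a_1}{24 d_1}=\frac{\gcd(d,\delta)^2}{24\delta}$ in the variable $e^{2\pi i z}$.

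Summing with multiplicities $r_{\delta}$, the function $f(\sigma z)$ vanishes to order $v=\frac{1}{24}\sum_{\delta\mid N}\frac{\gcd(d,\delta)^2 r_{\delta}}{\delta}$ in $e^{2\pi i z}$. To convert this into the order with respect to the genuine local parameter $q_h$, I would multiply by the width of the cusp $c/d$ for $\Gamma_0(N)$, namely $h=\frac{N}{\gcd(d^2,N)}=\frac{N}{d\,\gcd(d,N/d)}$, using $d\mid N$ and the identity $\gcd(d^2,N)=d\,\gcd(d,N/d)$. The product $h\cdot v$ then simplifies directly to $\frac{N}{24}\sum_{\delta\mid N}\frac{\gcd(d,\delta)^2 r_{\delta}}{\gcd(d,N/d)\,d\,\delta}$, which is exactly the claimed formula.

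The technical heart of the argument is the matrix factorization together with the bookkeeping of the eta multiplier system: one must check that $a_1=\gcd(\delta,d)$ emerges correctly from the Smith reduction, and, crucially, that neither the twenty-fourth root of unity $\varepsilon(L)$ nor the half-integral automorphy factor $(c_L w+d_L)^{1/2}$ disturbs the leading exponent, since they contribute only a nonzero constant and a sub-exponential factor. A secondary point requiring care is the normalization convention: one must confirm that the order of vanishing at the cusp is measured in the local uniformizer $q_h$, so that the width factor $h$ genuinely appears, and then compute $h=N/(d\,\gcd(d,N/d))$ independently for the cusp $c/d$ of $\Gamma_0(N)$. This matches the classical derivation underlying the cited result.
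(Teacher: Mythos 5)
The paper does not prove this statement at all---it is quoted verbatim from Ono's book \cite[Theorem 1.65]{ono2004} (in essence Ligozat's theorem)---and your proposal correctly reconstructs the standard derivation behind that citation: the factorization $M=LB$ with $a_1=\gcd(\delta c,d)=\gcd(\delta,d)$ (using $\gcd(c,d)=1$), the eta transformation law giving the leading exponent $a_1/(24d_1)=\gcd(d,\delta)^2/(24\delta)$ of each factor in $e^{2\pi iz}$, additivity over $\delta$, and multiplication by the cusp width $h=N/\gcd(d^2,N)=N/\bigl(d\gcd(d,N/d)\bigr)$ to pass to the local uniformizer. All steps check out, including the two points you flag---the multiplier $\varepsilon(L)$ and the factor $(c_Lw+d_L)^{1/2}$ contribute nothing to the exponential order, and the width identity $\gcd(d^2,N)=d\gcd(d,N/d)$ follows from $d\mid N$---so your proof is correct and coincides with the classical argument underlying the cited result.
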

We now recall a result of Sturm \cite{Sturm1984} which gives a criterion to test whether two modular forms are congruent modulo a given prime.
\begin{theorem}\label{Sturm}
	Let $p$ be a prime number, and $f(z)=\sum_{n=n_0}^\infty a(n)q^n$ and $g(z)=\sum_{n=n_1}^\infty b(n)q^n$ be modular forms of weight $k$ for $\Gamma_0(N)$ of characters $\chi$ and $\psi$, respectively, where $n_0, n_1\geq 0$. If either $\chi=\psi$ and 
	\begin{align*}
	a(n)\equiv b(n)\pmod p~~ \text{for all}~~ n\leq \frac{kN}{12}\prod_{d~~ \text{prime};~~ d|N}\left(1+\frac{1}{d}\right),
	\end{align*}
	or $\chi\neq\psi$ and 
	\begin{align*}
	a(n)\equiv b(n)\pmod p~~ \text{for all}~~ n\leq \frac{kN^2}{12}\prod_{d~~ \text{prime};~~ d|N}\left(1-\frac{1}{d^2}\right),
	\end{align*}
	then $f(z)\equiv g(z)\pmod p$~~ $(i.e.,~~a(n)\equiv b(n)\pmod p~~\text{for all}~~n)$.
\end{theorem}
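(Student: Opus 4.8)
The plan is to recast the two cases of the theorem as a single statement about one modular form on an appropriate congruence subgroup, and then to reduce the congruence to a valence-formula bound for level-one forms after passing to characteristic $p$. First I would reduce to one form. Put $h:=f-g=\sum_{n}c(n)q^n$. When $\chi=\psi$ we have $h\in M_k(\Gamma_0(N),\chi)$, and since $[\mathrm{SL}_2(\mathbb{Z}):\Gamma_0(N)]=N\prod_{d\mid N,\ d\,\mathrm{prime}}(1+1/d)$ the first displayed bound is exactly $\tfrac{k}{12}[\mathrm{SL}_2(\mathbb{Z}):\Gamma_0(N)]$. When $\chi\neq\psi$, both $f$ and $g$ lie in $M_k(\Gamma_1(N))$ (their characters become trivial on $\Gamma_1(N)$), so $h\in M_k(\Gamma_1(N))$, and since $[\mathrm{SL}_2(\mathbb{Z}):\Gamma_1(N)]=N^2\prod_{d\mid N,\ d\,\mathrm{prime}}(1-1/d^2)$ the second bound is $\tfrac{k}{12}[\mathrm{SL}_2(\mathbb{Z}):\Gamma_1(N)]$. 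Thus in both cases it suffices to prove the uniform claim: if $\Gamma$ is a congruence subgroup of index $\mu$ and $h\in M_k(\Gamma)$ has coefficients in the ring of integers $\mathcal{O}_K$ of a number field $K$, with $\mathfrak{p}\mid p$ a prime of $\mathcal{O}_K$, and $c(n)\equiv0\pmod{\mathfrak p}$ for every $n\leq \lfloor k\mu/12\rfloor$, then $h\equiv0\pmod{\mathfrak p}$.

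Next I would argue by contradiction using a norm over the cusps. Suppose $h\not\equiv0\pmod{\mathfrak p}$; scaling by a power of a uniformizer we may assume $\min_n v_{\mathfrak p}(c(n))=0$, so the reduction $\bar h\in\mathbb{F}[[q]]$ (with $\mathbb{F}=\mathcal{O}_K/\mathfrak p$) is nonzero with $m:=\mathrm{ord}_q(\bar h)>\lfloor k\mu/12\rfloor$. Choosing right coset representatives $\gamma_1=I,\gamma_2,\dots,\gamma_\mu$ for $\Gamma\backslash\mathrm{SL}_2(\mathbb{Z})$, I form the norm $F:=\prod_{i=1}^{\mu}h|_k\gamma_i$. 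Right multiplication by any element of $\mathrm{SL}_2(\mathbb{Z})$ permutes the cosets, so $F$ is a modular form of weight $k\mu$ on $\mathrm{SL}_2(\mathbb{Z})$, carrying at worst a character of order dividing $12$; replacing $F$ by a suitable power I may take the character trivial and $F$ to be $\mathbb{Z}_{(p)}$-integral. The factor from $\gamma_1=I$ is $h$ itself while the remaining factors are holomorphic at $\infty$, so, provided $\bar F\neq0$, the order relation $\mathrm{ord}_q(\bar F)\geq m$ survives these operations.

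The crux is a level-one valence bound in characteristic $p$: a nonzero reduction of a $\mathbb{Z}_{(p)}$-integral level-one form of weight $w$ vanishes at $\infty$ to order at most $\lfloor w/12\rfloor$. Over $\mathbb{C}$ this is the valence formula. To obtain it modulo $\mathfrak p$ for $p\geq5$ one uses the integral monomial basis $\{E_4^aE_6^b\Delta^c:4a+6b+12c=w\}$: since $\Delta=q\prod(1-q^n)^{24}$ has leading coefficient $1$ and $E_4,E_6$ have constant term $1$, the $q$-orders $c$ of the basis elements are distinct and remain distinct after reduction, so any nonzero $\mathbb{F}$-combination has $q$-order equal to the least $c$ occurring, which is at most $\lfloor w/12\rfloor$. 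Feeding $\bar F$ into this bound gives $m\leq\mathrm{ord}_q(\bar F)\leq\lfloor w/12\rfloor=\lfloor k\mu/12\rfloor$, contradicting $m>\lfloor k\mu/12\rfloor$; hence $h\equiv0\pmod{\mathfrak p}$, and the theorem follows.

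Two points will need the most care, and I expect the second to be the real obstacle. First, one must guarantee $\bar F\neq0$, that is, that no cusp-expansion factor $h|_k\gamma_i$ reduces to zero and corrupts the order relation; this is controlled by tracking $v_{\mathfrak p}$ across the whole set of cusp expansions simultaneously, as in Sturm's original treatment. Second, the monomial basis above is valid only for $p\geq5$: at $p=2,3$ the relation $1728\,\Delta=E_4^3-E_6^2$ degenerates and $\mathbb{Z}[E_4,E_6]$ is not the full integral ring of forms, so the staircase basis must be replaced by an honest $\mathbb{Z}_{(p)}$-basis of $M_w(\mathrm{SL}_2(\mathbb{Z}))$, or equivalently one invokes the $q$-expansion principle on the modular curve over $\mathbb{F}_p$, where the inequality $\mathrm{ord}_q\leq\deg\omega^{\otimes w}=w/12$ is a statement about a line bundle on a curve that is insensitive to the residue characteristic. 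Since every application in this paper is at $p=2$, this characteristic-$2$ bookkeeping is precisely the delicate part, whereas the reduction to level one and the norm construction are routine.
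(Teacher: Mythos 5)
First, a point of order: the paper does not prove this statement at all --- Theorem~\ref{Sturm} is quoted from Sturm \cite{Sturm1984} without proof and is used exactly once, to verify a congruence in $M_3(\Gamma_0(51),\chi_0)$ with $p=2$. So there is no internal proof to compare against; the benchmark is Sturm's original argument, and your proposal is essentially a reconstruction of it. The skeleton is right: with $h=f-g$, the two displayed bounds are exactly $\tfrac{k}{12}[\mathrm{SL}_2(\mathbb{Z}):\Gamma_0(N)]$ and $\tfrac{k}{12}[\mathrm{SL}_2(\mathbb{Z}):\Gamma_1(N)]$ (the character hypothesis only decides which congruence subgroup $h$ lives on); the norm $F=\prod_i h|_k\gamma_i$ over coset representatives is a level-one form of weight $k\mu$; and one concludes by a mod-$p$ valence bound at level one. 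Your emphasis on the residue characteristics $2$ and $3$ is well placed, since every application in this paper is at $p=2$.

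Three local repairs are needed, none fatal. (i) The norm has \emph{exactly} trivial character: right multiplication by $\gamma\in\mathrm{SL}_2(\mathbb{Z})$ permutes the cosets and $h|_k\delta=h$ for $\delta\in\Gamma$, so $F|_{k\mu}\gamma=F$ on the nose; no power need be taken. This matters because your other use of powering is wrong: raising $F$ to a power cannot create $\mathbb{Z}_{(p)}$-integrality. The genuine issue is that the coefficients of $h|_k\gamma_i$ lie in $\mathcal{O}_K[\zeta_N]$ only up to denominators supported on the primes dividing $N$; one scales by a constant and uses integrality away from $N$, which is harmless when $p\nmid N$ (as in the paper's application, $N=51$, $p=2$), while $p\mid N$ requires a separate reduction. (ii) Your monomial-basis argument for $p\geq 5$ is false as stated: the exponents $c$ in $E_4^aE_6^b\Delta^c$ with $4a+6b+12c=w$ are \emph{not} distinct (already $E_4^3$ and $E_6^2$ share $c=0$, and $E_4^3-E_6^2=1728\Delta$ has strictly larger $q$-order). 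The correct argument is precisely the one you propose only for $p=2,3$: take the integral Miller (staircase) basis $f_0,\dots,f_{d-1}$ of $M_w(\mathrm{SL}_2(\mathbb{Z}))$ with $f_i=q^i+O(q^d)$, whence any nonzero reduction has $q$-order at most $d-1\leq\lfloor w/12\rfloor$; this works uniformly for all $p$, so you should use it throughout rather than as a patch. (iii) Your worry that $\bar F$ could vanish is resolved as in Sturm: normalize each factor $h|_k\gamma_i$ by a power of a uniformizer to have unit content; Gauss's lemma for power series over the local ring then gives the normalized product unit content, its reduction is a product of nonzero series over a domain, hence nonzero, and the $q$-orders add, yielding $\mathrm{ord}_q(\bar F)\geq\mathrm{ord}_q(\bar h)>k\mu/12$, the desired contradiction. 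With these patches your proposal is a complete proof along Sturm's original lines.
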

\par We next recall the definition of Hecke operators.
Let $m$ be a positive integer and $f(z) = \sum_{n=0}^{\infty} a(n)q^n \in M_{\ell}(\Gamma_0(N),\chi)$. Then the action of Hecke operator $T_m$ on $f(z)$ is defined by 
\begin{align*}
f(z)|T_m := \sum_{n=0}^{\infty} \left(\sum_{d\mid \gcd(n,m)}\chi(d)d^{\ell-1}a\left(\frac{nm}{d^2}\right)\right)q^n.
\end{align*}
In particular, if $m=p$ is prime, we have 
\begin{align*}
f(z)|T_p := \sum_{n=0}^{\infty} \left(a(pn)+\chi(p)p^{\ell-1}a\left(\frac{n}{p}\right)\right)q^n.
\end{align*}
We take by convention that $a(n/p)=0$ whenever $p \nmid n$.
If $f$ is an $\eta$-quotient with the properties listed in Theorem \ref{thm_ono1}, and $p|s$ (here $s$ is as defined in Theorem \ref{thm_ono1}), then $\chi(p)=0$ so that the latter term vanishes. In this case, we have the factorization property that
\begin{align*}
\left(f\cdot\sum_{n=0}^\infty g(n)q^{pn}\right)|T_p=\left(\sum_{n=0}^\infty a(pn)q^{n}\right)\left(\sum_{n=0}^\infty g(n)q^{n}\right).
\end{align*}
\par We finally recall a density result of Serre \cite{serre3}  about the divisibility of Fourier coefficients of modular forms.
\begin{theorem}[Serre]\label{Serre}
	Let $f(z)$ be a modular form of positive integer weight $k$ on some congruence subgroup of $SL_2(\mathbb{Z})$ with Fourier expansion $$f(z)=\sum_{n=0}^{\infty}a(n)q^n,$$
	where $a(n)$ are algebraic integers in some number field. If $m$ is a positive integer, then there exists a constant $c>0$ such that there are  $O\left(\frac{X}{(\log X)^{c}}\right)$ integers $n \leq  X$ such that
	$a(n)$ is not divisible by $m$.
\end{theorem}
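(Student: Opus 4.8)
The plan is to reduce the statement to the reduction of $f$ modulo a single prime ideal, to produce a positive-density supply of primes that ``annihilate'' the Hecke eigenvalues, and finally to convert this into the claimed count through a classical multiplicative sieve estimate of Landau type. First I would carry out the standard reductions. Since a rational integer is divisible by $m=\prod_i \ell_i^{j_i}$ precisely when it is divisible by each $\ell_i^{j_i}$, the set of $n\le X$ with $m\nmid a(n)$ is the union of the sets with $\ell_i^{j_i}\nmid a(n)$; it therefore suffices to bound each such set by $O(X/(\log X)^{c_i})$ and take $c=\min_i c_i$, and prime powers $\ell^j$ reduce to the prime case by carrying out the whole argument over the finite ring $\mathbb{Z}/\ell^j\mathbb{Z}$ (or its analogue in $\mathcal{O}_K$). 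As the coefficients lie in $\mathcal{O}_K$, I fix a prime $\mathfrak{l}\mid \ell$ and reduce modulo $\mathfrak{l}$, so that $\bar f=\sum \bar a(n)q^n$ has coefficients in the finite field $\mathbb{F}_{\mathfrak{l}}$. Finally, any congruence subgroup contains some $\Gamma(N)$, and $M_k(\Gamma(N))$ embeds, after a standard change of variable, into a direct sum of spaces $M_k(\Gamma_0(N'),\chi)$; hence I may assume from the start that $f\in M_k(\Gamma_0(N),\chi)$ with coefficients in $\mathcal{O}_K$.

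Next I would exploit the finiteness of the mod-$\mathfrak{l}$ Hecke module. The reduction $\bar f$ lies in the finite-dimensional $\mathbb{F}_{\mathfrak{l}}$-space of weight-$k$ modular forms mod $\mathfrak{l}$ on $\Gamma_0(N)$, which is stable under all $T_p$ with $p\nmid N\ell$, and the algebra these operators generate is finite. Extending scalars to $\overline{\mathbb{F}}_\ell$ and decomposing, each arising system of Hecke eigenvalues is attached to a semisimple mod-$\ell$ Galois representation $\bar\rho\colon \mathrm{Gal}(\overline{\mathbb{Q}}/\mathbb{Q})\to \mathrm{GL}_2(\overline{\mathbb{F}}_\ell)$ with $\bar a(p)\equiv \mathrm{tr}\,\bar\rho(\mathrm{Frob}_p)$ for $p\nmid N\ell$ (Deligne, Deligne--Serre). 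The key point is that such $\bar\rho$ is \emph{odd}: complex conjugation $c$ satisfies $\det\bar\rho(c)=-1$, so $\bar\rho(c)$ has eigenvalues $\{1,-1\}$ and hence trace $0$. Thus the image $G=\bar\rho(\mathrm{Gal})$, a finite group, always contains a trace-zero element, even in the reducible (Eisenstein) case $\bar\rho=\psi_1\oplus\psi_2$ where oddness forces $\psi_1(c)=-\psi_2(c)$. By the Chebotarev density theorem the primes $p$ whose Frobenius lands in a trace-zero conjugacy class have a well-defined positive density, so the set $P$ of primes with $\bar a(p)\equiv 0 \pmod{\mathfrak{l}}$ has density $\delta>0$.

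Finally I would convert this into the count. For a normalized eigenform the $\bar a(n)$ are multiplicative, and the Hecke recursion $\bar a(p^{r+1})=\bar a(p)\,\bar a(p^r)-\chi(p)p^{k-1}\bar a(p^{r-1})$ shows that when $\bar a(p)\equiv 0$ for $p\in P$, any $n$ with $\bar a(n)\not\equiv 0$ must have every prime of $P$ dividing it to an even power. Writing such $n=ab^2$ with $a$ squarefree and free of prime factors in $P$, Landau's estimate on integers composed only of primes outside a density-$\delta$ set gives $O(Y/(\log Y)^{\delta})$ admissible squarefree $a\le Y$; summing over $b$ (the term $b=1$ dominating) yields $O(X/(\log X)^{\delta})$, which is the desired bound with $c=\delta$. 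I expect the main obstacle to be the gap between this clean eigenform picture and the general case: the mod-$\mathfrak{l}$ Hecke algebra need not be semisimple, so $\bar f$ is only a \emph{generalized} eigenvector and the vanishing argument must be run along a Hecke-stable filtration with care that the upper bound survives passage to the composition factors; and one must confirm that the positive lower bound $\delta>0$ for the density of killing primes holds uniformly across all eigensystems occurring in $\bar f$, so that taking the minimum of the finitely many exponents leaves a single positive constant $c$.
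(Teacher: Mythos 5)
First, a point of comparison: the paper does not prove this statement at all --- Theorem \ref{Serre} is quoted from Serre \cite{serre3} as a black box, so your proposal can only be measured against Serre's original argument. Measured that way, your skeleton is the right one: reduce to a prime-power modulus, attach Galois representations to the eigensystems occurring in the reduction of $f$ (Deligne, Deligne--Serre for weight $1$), use oddness of these representations so that complex conjugation $c$ has trace zero, apply Chebotarev to get a positive-density set $P$ of primes $p$ with $\bar a(p)\equiv 0$, and finish with a Landau-type count. Note also that your worry about uniformity across eigensystems is easily dispatched: $c$ is a \emph{single} element with trace zero in \emph{every} odd representation simultaneously, so primes with $\mathrm{Frob}_p$ equal to the class of $c$ in the compositum of the finitely many fields cut out kill all eigensystems at once.

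The genuine gap is exactly the one you flag in your last sentence and then leave open: your counting step (multiplicativity plus the Hecke recursion, hence ``every prime of $P$ divides $n$ to an even power'') is valid \emph{only} for a normalized eigenform, and a Hecke-stable filtration does not by itself repair this, because the coefficients $a(n)$ of $f$ are not the coefficients of the composition factors of the filtration. Serre's actual resolution is different and you need it: for $p\in P$ as above, every eigenvalue of $T_p$ on the finite-dimensional Hecke span $V$ of $\bar f$ vanishes, so $T_p$ acts \emph{nilpotently} on $V$, and any product $T_{p_1}\cdots T_{p_d}$ of $d=\dim V$ distinct such operators annihilates $\bar f$. Since for $p_1,\ldots,p_r$ distinct primes exactly dividing $n$ one has that $\bar a(n)$ is the $\bigl(n/(p_1\cdots p_r)\bigr)$-th coefficient of $T_{p_1}\cdots T_{p_r}\bar f$, nonvanishing of $\bar a(n)$ forces $n$ to have fewer than $d$ primes of $P$ exactly dividing it, and the count of such $n\leq X$ is $O\bigl(X(\log\log X)^{d-1}/(\log X)^{\delta}\bigr)$. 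This also corrects a small overclaim: one obtains every exponent $c<\delta$, not $c=\delta$ itself. Two further repairs: for modulus $\ell^{j}$ you cannot literally ``rerun the argument over $\mathbb{Z}/\ell^{j}\mathbb{Z}$,'' since mod-$\ell^{j}$ eigensystems do not directly carry attached representations; Serre works instead with the $\ell$-adic representations, where $\mathrm{tr}\,\rho(c)=0$ holds exactly, and applies Chebotarev to the finite extension cut out modulo $\ell^{j}$. And for $\ell=2$ the eigenvalue argument as you state it degenerates ($-1=1$), though the conclusion $\mathrm{tr}\,\bar\rho(c)=2\equiv 0$ survives; it is worth saying so rather than relying on the eigenvalues $\{1,-1\}$ being distinct.
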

\section{Proof of Theorems \ref{thmnew}}
\begin{proof}
	We first recall the following even-odd disection of the $3$-regular partitions \cite[(6)]{Keith2021}:
	\begin{align}\label{eqn-new-3}
	\sum_{n=0}^{\infty}b_3(n)q^n=\frac{f_3}{f_1}\equiv\frac{f_1^8}{f_3^2}+q\frac{f_3^{10}}{f_1^4}\pmod 2.
	\end{align}
	Thus, extracting the terms with even powers of $q$, we obtain
	\begin{align}\label{thmnew1}
	\sum_{n=0}^{\infty}b_3(2n)q^n\equiv \frac{f_1^4}{f_3}\pmod 2.
	\end{align} 
	Let $$G_{3,1}(z):= \frac{\eta^4(z)\eta^2(51z)\eta(17z)}{\eta(3z)}$$
	and $$G_{3,2}(z):= \frac{\eta^4(17z)\eta^2(3z)\eta(z)}{\eta(51z)}.$$
	By Theorems \ref{thm_ono1} and \ref{thm_ono2} we find that $G_{3,1}(z)$ and $G_{3,2}(z)$ are modular forms of weight $3$, level $51$ and character $\chi_0=(\frac{-3\cdot 17^{3}}{\bullet})$. 
	By \eqref{thmnew1} the Fourier expansions of our forms satisfy
	\begin{align*}
	G_{3,1}(z)=\left(\sum_{n=0}^{\infty}b_3(2n)q^{n+5}\right)f^2_{51}f_{17}
	\end{align*}
	and 
	\begin{align*}
	G_{3,2}(z)=\left(\sum_{n=0}^{\infty}b_3(2n)q^{17n+1}\right)f^2_{3}f_{1}.
	\end{align*}
	We then calculate that 
	\begin{align*}
	G_{3,1}(z)|T_{17}\equiv \left(\sum_{n=0}^{\infty}b_3(34n+24)q^{n+1}\right)f^2_{3}f_{1}\pmod2.
	\end{align*}
	Since the Hecke operator is an endomorphism on $ M_{3}\left(\Gamma_{0}(51), \chi_0\right)$, we have that $G_{3,1}(z)|T_{17}\in M_{3}\left(\Gamma_{0}(51), \chi_0\right)$. By Theorem \ref{Sturm}, the Sturm bound for this space of forms is $18$. We wish to verify the congruence
	\begin{align*}
	q\left(\sum_{n=0}^{\infty}b_3(34n+24)q^{n}\right)f^2_{3}f_{1}\equiv q\frac{f^4_{17}}{f_{51}} f^2_{3}f_{1}\pmod 2.
	\end{align*} 
	The coefficient of $q^{18}$ on the left side involves the value $b_3(636)$; thus, $f_3/f_1$ must be expanded at least that far, and the product on the right side must be constructed up to the $q^{18}$ terms. Finally, expansion with a calculation package such as $Sage$ confirms that all coefficients up to the desired bound are congruent modulo $2$, and the first part of the theorem is established.\\
	Since only powers for which $17|n$ can be nonzero on the right side of the statement, we
	obtain:
	\begin{align*}
	b_3(34(17n+1)+24)=b_3(2\cdot 17^2n+58)\equiv 0\pmod2.
	\end{align*}
	Finally, recursively applying the relation 
	\begin{align*}
	b_3(2n)\equiv b_3(34\cdot 17n+24)\pmod 2,
	\end{align*}
	we obtain
	\begin{align*}
	b_3(2\cdot 17^2n+58)&\equiv b_3(2\cdot 17^2(17^2n+29)+24)\pmod 2\\
	&=b_3(2\cdot 17^4n+17^2\cdot 58+24)\\
	&\equiv b_3(2\cdot 17^6n+17^4\cdot58+17^2\cdot 24 +24)\pmod 2\\
	&\equiv \ldots\\
	&\equiv b_3\left(2\cdot 17^{2k}n+17^{2k-2}\cdot 58+24\left(\frac{17^{2k-2}-1}{288}\right)\right)\equiv 0\pmod 2,
	\end{align*}
	where the last line is given by a finite geometric summation. This completes the proof of the theorem.
\end{proof}
\section{Proof of Theorems \ref{thm1} and \ref{thm2}}
We prove Theorems \ref{thm1} and \ref{thm2} using the approach developed in \cite{radu1, radu2}. Throughout this section, $\Gamma$ denotes the full modular group $\text{SL}_2(\mathbb{Z})$. We recall that the index of $\Gamma_{0}(N)$ in $\Gamma$ is
\begin{align*}
 [\Gamma : \Gamma_0(N)] = N\prod_{p|N}(1+p^{-1}), 
\end{align*}
where $p$ denotes a prime.
\par 
For a positive integer $M$, let $R(M)$ be the set of integer sequences $r=(r_\delta)_{\delta|M}$ indexed by the positive divisors of $M$. 
If $r \in R(M)$ and $1=\delta_1<\delta_2< \cdots <\delta_k=M$ 
are the positive divisors of $M$, we write $r=(r_{\delta_1}, \ldots, r_{\delta_k})$. Define $c_r(n)$ by 
\begin{align}
\sum_{n=0}^{\infty}c_r(n)q^n:=\prod_{\delta|M}(q^{\delta};q^{\delta})^{r_{\delta}}_{\infty}=\prod_{\delta|M}\prod_{n=1}^{\infty}(1-q^{n \delta})^{r_{\delta}}.
\end{align}
The approach to proving congruences for $c_r(n)$ developed by Radu \cite{radu1, radu2} reduces the number of coefficients that one must check as compared with the classical method which uses Sturm's bound alone.
\par 
Let $m$ be a positive integer. For any integer $s$, let $[s]_m$ denote the residue class of $s$ in $\mathbb{Z}_m:= \mathbb{Z}/ {m\mathbb{Z}}$. 
Let $\mathbb{Z}_m^{*}$ be the set of all invertible elements in $\mathbb{Z}_m$. Let $\mathbb{S}_m\subseteq\mathbb{Z}_m$  be the set of all squares in $\mathbb{Z}_m^{*}$. For $t\in\{0, 1, \ldots, m-1\}$
and $r \in R(M)$, we define a subset $P_{m,r}(t)\subseteq\{0, 1, \ldots, m-1\}$ by
\begin{align*}
P_{m,r}(t):=\left\{t': \exists [s]_{24m}\in \mathbb{S}_{24m} ~ \text{such} ~ \text{that} ~ t'\equiv ts+\frac{s-1}{24}\sum_{\delta|M}\delta r_\delta \pmod{m} \right\}.
\end{align*}
\begin{definition}
	Suppose $m, M$ and $N$ are positive integers, $r=(r_{\delta})\in R(M)$ and $t\in \{0, 1, \ldots, m-1\}$. Let $k=k(m):=\gcd(m^2-1,24)$ and write  
	\begin{align*}
	\prod_{\delta|M}\delta^{|r_{\delta}|}=2^s\cdot j,
	\end{align*}
	where $s$ and $j$  are nonnegative integers with $j$ odd. The set $\Delta^{*}$ consists of all tuples $(m, M, N, (r_{\delta}), t)$ satisfying these conditions and all of the following.
	\begin{enumerate}
		\item Each prime divisor of $m$ is also a divisor of $N$.
		\item $\delta|M$ implies $\delta|mN$ for every $\delta\geq1$ such that $r_{\delta} \neq 0$.
		\item $kN\sum_{\delta|M}r_{\delta} mN/\delta \equiv 0 \pmod{24}$.
		\item $kN\sum_{\delta|M}r_{\delta} \equiv 0 \pmod{8}$.  
		\item  $\frac{24m}{\gcd{(-24kt-k{\sum_{{\delta}|M}}{\delta r_{\delta}}},24m)}$ divides $N$.
		\item If $2|m$, then either $4|kN$ and $8|sN$ or $2|s$ and $8|(1-j)N$.
	\end{enumerate}
\end{definition}
Let $m, M, N$ be positive integers. For $\gamma=
\begin{bmatrix}
	a  &  b \\
	c  &  d     
\end{bmatrix} \in \Gamma$, $r\in R(M)$ and $r'\in R(N)$, set 
	\begin{align*}
	p_{m,r}(\gamma):=\min_{\lambda\in\{0, 1, \ldots, m-1\}}\frac{1}{24}\sum_{\delta|M}r_{\delta}\frac{\gcd^2(\delta a+ \delta k\lambda c, mc)}{\delta m}
	\end{align*}
and 
\begin{align*}
	p_{r'}^{*}(\gamma):=\frac{1}{24}\sum_{\delta|N}r'_{\delta}\frac{\gcd^2(\delta, c)}{\delta}.
\end{align*}
\begin{lemma}\label{lem1}\cite[Lemma 4.5]{radu1} Let $u$ be a positive integer, $(m, M, N, r=(r_{\delta}), t)\in\Delta^{*}$ and $r'=(r'_{\delta})\in R(N)$. 
Let $\{\gamma_1,\gamma_2, \ldots, \gamma_n\}\subseteq \Gamma$ be a complete set of representatives of the double cosets of $\Gamma_{0}(N) \backslash \Gamma/ \Gamma_\infty$. 
Assume that $p_{m,r}(\gamma_i)+p_{r'}^{*}(\gamma_i) \geq 0$ for all $1 \leq i \leq n$. Let $t_{min}=\min_{t' \in P_{m,r}(t)} t'$ and
\begin{align*}
\nu:= \frac{1}{24}\left\{ \left( \sum_{\delta|M}r_{\delta}+\sum_{\delta|N}r'_{\delta}\right)[\Gamma:\Gamma_{0}(N)] -\sum_{\delta|N} \delta r'_{\delta}\right\}-\frac{1}{24m}\sum_{\delta|M}\delta r_{\delta} 
	- \frac{ t_{min}}{m}.
\end{align*}	
If the congruence $c_r(mn+t')\equiv0\pmod u$ holds for all $t' \in P_{m,r}(t)$ and $0\leq n\leq \lfloor\nu\rfloor$, then it holds for all $t'\in P_{m,r}(t)$ and $n\geq0$.
\end{lemma}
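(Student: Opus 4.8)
The plan is to realize the desired family of congruences as the vanishing modulo $u$ of a single holomorphic modular form, and then to reduce the infinitely many coefficient conditions to finitely many via Sturm's theorem (Theorem \ref{Sturm}). The number $\nu$ will turn out to be exactly the Sturm bound for the form I construct, corrected by the valuation of its leading $q$-term. Concretely, I would first introduce the eta-quotient
\begin{align*}
g_r(z):=\prod_{\delta\mid M}\eta(\delta z)^{r_\delta}=q^{\frac{1}{24}\sum_{\delta\mid M}\delta r_\delta}\sum_{n\geq 0}c_r(n)q^n,
\end{align*}
so that the arithmetic of $c_r$ is encoded in an eta-quotient up to the explicit fractional $q$-shift, and then build from $g_r$ an operator that extracts the coefficients lying in the progressions indexed by $P_{m,r}(t)$.

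The heart of the construction is to understand why one must work with the whole set $P_{m,r}(t)$ rather than the single residue $t$. Under the action of a matrix in $\Gamma_0(N)$ the transformation of $g_r$ introduces, through the character and the $q^{1/24}$ factors, a twist that sends the coefficient attached to the residue $t$ to those attached to residues $ts+\frac{s-1}{24}\sum_{\delta\mid M}\delta r_\delta$ with $[s]_{24m}\in\mathbb{S}_{24m}$; this is precisely the defining relation of $P_{m,r}(t)$. Hence the only $\Gamma_0(N)$-stable object is the sum over the entire square-orbit, and checking the congruence for all $t'\in P_{m,r}(t)$ simultaneously is forced. I would therefore form the filtered series $\sum_{t'\in P_{m,r}(t)}\sum_{n\geq 0}c_r(mn+t')q^{\,mn+t'}$, pass to the variable normalization that clears the denominator $m$, and finally multiply by the auxiliary eta-quotient $g_{r'}(z)=\prod_{\delta\mid N}\eta(\delta z)^{r'_\delta}$ to obtain a candidate form $\Phi(z)$.

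Next I would verify that $\Phi$ is a genuine holomorphic modular form on $\Gamma_0(N)$. Its weight is $\ell=\tfrac12\big(\sum_{\delta\mid M}r_\delta+\sum_{\delta\mid N}r'_\delta\big)$, and conditions (1)--(6) in the definition of $\Delta^*$ are exactly the integrality and congruence requirements—analogues of the $24$-divisibility hypotheses of Theorem \ref{thm_ono1}—that make the filtered combination transform under $\Gamma_0(N)$ with a well-defined Nebentypus character and without fractional $q$-powers or spurious roots of unity. Holomorphy at the cusps is then the decisive input: using the cusp-order formula of Theorem \ref{thm_ono2}, adapted to the $U_m$-type extraction operator, the order of vanishing of $\Phi$ at the cusp represented by $\gamma_i$ splits as $p_{m,r}(\gamma_i)+p^{*}_{r'}(\gamma_i)$, where the minimum over $\lambda$ and the twisted gcd $\gcd^2(\delta a+\delta k\lambda c,\,mc)$ in $p_{m,r}$ reflect precisely the averaging introduced by the extraction. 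The hypothesis $p_{m,r}(\gamma_i)+p^{*}_{r'}(\gamma_i)\geq 0$ for every representative $\gamma_i$ of $\Gamma_0(N)\backslash\Gamma/\Gamma_\infty$ then guarantees $\Phi$ is holomorphic at all cusps, so $\Phi\in M_\ell(\Gamma_0(N),\chi)$.

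Finally I would apply Sturm's theorem to $\Phi$. Since $\Phi$ is a holomorphic form of weight $\ell$ on $\Gamma_0(N)$, it is $\equiv 0\pmod u$ as soon as its $q$-coefficients vanish modulo $u$ up to $\tfrac{\ell}{12}[\Gamma:\Gamma_0(N)]=\tfrac{\ell N}{12}\prod_{d\mid N}(1+\tfrac1d)$. Tracking the leading $q$-power of $\Phi$—which is shifted down by $\tfrac{1}{24}\sum_{\delta\mid N}\delta r'_\delta$ from the factor $g_{r'}$ and by $\tfrac{1}{24m}\sum_{\delta\mid M}\delta r_\delta+\tfrac{t_{min}}{m}$ from the normalized filtered part of $g_r$—shows that the number of coefficients that actually require checking is at most $\lfloor\nu\rfloor$, with $\nu$ the quantity in the statement. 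Vanishing of these coefficients modulo $u$ forces $\Phi\equiv 0\pmod u$, i.e. $c_r(mn+t')\equiv 0\pmod u$ for all $t'\in P_{m,r}(t)$ and all $n\geq 0$. I expect the main obstacle to be the cusp-order bookkeeping for the filtered form: proving that the extraction operator contributes exactly $p_{m,r}(\gamma_i)$ to the order at each cusp (with the correct minimum over $\lambda$) is the delicate, technical step, since the ordinary eta-quotient formula of Theorem \ref{thm_ono2} must be combined carefully with the $U_m$-averaging and the character twist to yield a clean nonnegativity criterion.
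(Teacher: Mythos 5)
You should first note that the paper contains no proof of this lemma at all: it is quoted verbatim, with citation, from Radu \cite{radu1}, and is used as a black box in the proofs of Theorems \ref{thm1} and \ref{thm2}. So the comparison must be against Radu's original argument, and judged that way your sketch reconstructs the correct strategy: one cannot isolate the single residue $t$ because the slash action of $\Gamma_0(N)$, through the eta multiplier and the $q^{1/24}$ factors, permutes the residues $ts+\frac{s-1}{24}\sum_{\delta\mid M}\delta r_\delta$ with $[s]_{24m}\in\mathbb{S}_{24m}$, which is exactly why the orbit sum over $P_{m,r}(t)$ is the right object; multiplying by the auxiliary quotient $\prod_{\delta\mid N}\eta(\delta z)^{r'_\delta}$ and using $p_{m,r}(\gamma_i)+p^{*}_{r'}(\gamma_i)\geq 0$ at the double-coset representatives yields holomorphy at all cusps; and your identification of $\nu$ as the bound $\frac{k}{12}[\Gamma:\Gamma_0(N)]$ with $k=\frac{1}{2}\bigl(\sum_{\delta\mid M}r_\delta+\sum_{\delta\mid N}r'_\delta\bigr)$, corrected by the $q$-valuations $\frac{1}{24}\sum_{\delta\mid N}\delta r'_\delta$ and $\frac{1}{24m}\sum_{\delta\mid M}\delta r_\delta+\frac{t_{min}}{m}$ of the two factors, matches the formula in the statement.

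There are, however, two points to flag. The concrete gap: you invoke Sturm's theorem (Theorem \ref{Sturm}) to pass from finitely many checks to the full congruence, but that theorem, as stated, holds modulo a \emph{prime} $p$, whereas the lemma asserts a congruence modulo an arbitrary positive integer $u$. This step fails as written and needs a patch: either reduce to prime powers by the Chinese remainder theorem and bootstrap (if all coefficients up to the bound are $\equiv 0\pmod{p^e}$, Sturm gives $f\equiv 0\pmod p$, so $f/p$ is again a holomorphic form with integer coefficients and one inducts on $e$), or use an integral echelonized (Miller-type) basis of the relevant space, which yields the finite-check principle modulo any $u$ directly; Radu's framework effectively supplies such a mod-$u$ statement. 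Second, the step you yourself flag as delicate is precisely where essentially all of Radu's work lies: proving that slashing the extraction operator by $\gamma_i$ produces the lower bound $p_{m,r}(\gamma_i)$ at each cusp, with the minimum over $\lambda$ and the twisted $\gcd^2(\delta a+\delta k\lambda c, mc)$ arising from the transformation of the translated eta factors, and that conditions (1)--(6) defining $\Delta^{*}$ annihilate the multiplier-system obstructions (the roots of unity from the eta multiplier) so that the orbit sum is an honest element of $M_\ell(\Gamma_0(N),\chi)$. Your proposal asserts both rather than proving them, so as it stands it is a faithful roadmap of the known proof rather than a complete one.
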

	To apply Lemma \ref{lem1}, we utilize the following result, which gives us a complete set of representatives of the double coset in $\Gamma_{0}(N) \backslash \Gamma/ \Gamma_\infty$. 
\begin{lemma}\label{lem2}\cite[Lemma 4.3]{wang} If $N$ or $\frac{1}{2}N$ is a square-free integer, then
		\begin{align*}
		\bigcup_{\delta|N}\Gamma_0(N)\begin{bmatrix}
		1  &  0 \\
		\delta  &  1      
		\end{bmatrix}\Gamma_ {\infty}=\Gamma.
		\end{align*}
\end{lemma}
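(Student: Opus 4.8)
The plan is to reinterpret the asserted set equality as a statement about the double-coset space $\Gamma_0(N)\backslash\Gamma/\Gamma_\infty$ and to identify this space with the cusps of $\Gamma_0(N)$. Recall that $\Gamma=\text{SL}_2(\mathbb{Z})$ acts transitively on $\mathbb{P}^1(\mathbb{Q})$ and that the stabilizer of the cusp $\infty=\frac{1}{0}$ is exactly $\pm\Gamma_\infty$. Since $-I\in\Gamma_0(N)$, the sign can be absorbed into the left factor, so the map $\gamma\mapsto\gamma\cdot\infty$ descends to a bijection between $\Gamma_0(N)\backslash\Gamma/\Gamma_\infty$ and the $\Gamma_0(N)$-orbits on $\mathbb{P}^1(\mathbb{Q})$, i.e. the cusps of $\Gamma_0(N)$. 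Under this dictionary the matrix $\begin{bmatrix}1&0\\\delta&1\end{bmatrix}$ sends $\infty$ to $\frac{1}{\delta}$, so the claimed identity is equivalent to showing that $\{\frac{1}{\delta}:\delta\mid N\}$ is a complete and irredundant set of representatives for the cusps of $\Gamma_0(N)$.

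For distinctness I would use the elementary invariant $\gcd(c,N)$ attached to a cusp written as $\frac{a}{c}$ in lowest terms. Acting by $g=\begin{bmatrix}\alpha&\beta\\\gamma'&\delta'\end{bmatrix}\in\Gamma_0(N)$ replaces the (again coprime) denominator $c$ by $\gamma' a+\delta' c\equiv\delta' c\pmod N$, and the relation $\alpha\delta'-\beta\gamma'=1$ together with $N\mid\gamma'$ forces $\gcd(\delta',N)=1$; hence $\gcd(\gamma' a+\delta' c,N)=\gcd(c,N)$, so $\gcd(c,N)$ is a genuine $\Gamma_0(N)$-invariant of the cusp. For $\frac{1}{\delta}$ this invariant equals $\gcd(\delta,N)=\delta$, so distinct divisors $\delta\mid N$ yield inequivalent cusps, hence distinct double cosets; in particular there are at least $\tau(N)$ of them, where $\tau(N)$ denotes the number of positive divisors of $N$.

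For completeness I would invoke the classical description of the cusps of $\Gamma_0(N)$: every cusp is equivalent to some $\frac{a}{c}$ with $c\mid N$, and the number of inequivalent cusps with a given denominator $c$ is $\varphi(\gcd(c,N/c))$, so the total number of cusps is $\sum_{c\mid N}\varphi(\gcd(c,N/c))$. The hypothesis enters precisely here: if $N$ or $\frac{1}{2}N$ is square-free, then for every $\delta\mid N$ one has $\gcd(\delta,N/\delta)\in\{1,2\}$, whence $\varphi(\gcd(\delta,N/\delta))=1$ and the total cusp count collapses to exactly $\tau(N)$. Since the invariant $\gcd(c,N)$ already takes each value $e\mid N$ (for instance on the cusp $\frac{1}{e}$, whose invariant is $\gcd(e,N)=e$) and there are exactly $\tau(N)$ cusps, the invariant is a surjection between finite sets of equal cardinality, hence a bijection onto the divisors of $N$. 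Consequently each cusp is represented by exactly one $\frac{1}{\delta}$, and translating back, every $\gamma\in\Gamma$ lies in $\Gamma_0(N)\begin{bmatrix}1&0\\\delta&1\end{bmatrix}\Gamma_\infty$ for a unique $\delta\mid N$, which is the asserted equality.

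The hard part is the cusp count, and more precisely the verification that the square-freeness hypothesis forces $\gcd(\delta,N/\delta)\le 2$ for all $\delta\mid N$ (equivalently $\varphi(\gcd(\delta,N/\delta))=1$); this is the only place the hypothesis is used and is exactly what reduces the generally larger cusp number down to $\tau(N)$. The double-coset/cusp dictionary and the invariant $\gcd(c,N)$ are then formal. If one prefers a self-contained argument that avoids quoting the cusp-counting formula, the same conclusion follows by reducing an arbitrary primitive column vector $(a,c)^{T}$ under $\Gamma_0(N)$ to $(1,\gcd(c,N))^{T}$; the only subtlety is that the residue of the numerator modulo $\gcd(\delta,N/\delta)$ is either unconstrained (when this gcd is $1$) or pinned to the class of an integer coprime to $\delta$ (when it is $2$), which is once more the point at which square-freeness is indispensable.
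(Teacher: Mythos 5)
Your proof is correct, but note that there is nothing in the paper to compare it against: the paper imports this statement wholesale from Wang \cite[Lemma 4.3]{wang} and supplies no proof of its own, using it only as a black box to feed a complete set of double-coset representatives into Lemma \ref{lem1}. On its own merits, your argument is the standard cusp-theoretic one and every step checks out. The dictionary $\Gamma_0(N)\backslash\Gamma/\Gamma_\infty\leftrightarrow\Gamma_0(N)\backslash\mathbb{P}^1(\mathbb{Q})$ is legitimate because the full stabilizer of $\infty$ in $\mathrm{SL}_2(\mathbb{Z})$ is $\pm\Gamma_\infty$ and $-I\in\Gamma_0(N)$ absorbs the sign; the verification that $\gcd(c,N)$ is a $\Gamma_0(N)$-invariant of a cusp $\frac{a}{c}$ in lowest terms is correct (the new denominator is $\gamma' a+\delta' c\equiv\delta' c\pmod N$ with $\gcd(\delta',N)=1$), and evaluating it on $\frac{1}{\delta}$ gives $\delta$, so the $\tau(N)$ candidate representatives are pairwise inequivalent. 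The counting step is where the hypothesis genuinely enters, and your case analysis is right: if $N$ is square-free then $\gcd(\delta,N/\delta)=1$ for all $\delta\mid N$, while if $\frac{1}{2}N$ is square-free but $N$ is not, then $N=4m'$ with $m'$ odd square-free, whence $\gcd(\delta,N/\delta)\in\{1,2\}$; in either case $\varphi(\gcd(\delta,N/\delta))=1$, so the classical cusp count $\sum_{c\mid N}\varphi(\gcd(c,N/c))$ collapses to $\tau(N)$, and the pigeonhole argument finishes it. Indeed you prove slightly more than the displayed equality, namely that the union is disjoint, which is exactly the ``complete set of representatives'' form in which the paper actually uses the lemma. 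Two minor remarks: you quote the cusp-counting formula for $\Gamma_0(N)$ without proof, which is fine since it is classical, but it is the one substantial external input; and your closing sketch of a self-contained reduction of primitive vectors $(a,c)^{T}$ is too vague to stand alone, though you flagged it as optional, so it does not affect the validity of the main argument.
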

\begin{proof}[Proof of Theorem \ref{thm1}]
From \eqref{eqn-new-3}, we have 
\begin{align*}
\sum_{n=0}^{\infty}b_3(n)q^n=\frac{f_3}{f_1}\equiv\frac{f_1^8}{f_3^2}+q\frac{f_3^{10}}{f_1^4}\pmod 2.
\end{align*}
Extracting the terms with even powers of $q$, we obtain
\begin{align*}
\sum_{n=0}^{\infty}b_3(2n)q^n\equiv \frac{f_1^4}{f_3}\pmod 2.
\end{align*}
Let $(m,M,N,r,t)=(841,3,87,(4,-1),64)$. It is easy to verify that $(m,M,N,r,t) \in \Delta^{*}$ and $P_{m,r}(t)=\{6,64,151,180,209,238,296,412,499,615,673,702,731,760\}$.
By Lemma \ref{lem2}, we know that $\left\{\begin{bmatrix}
	1  &  0 \\
	\delta  &  1      
	\end{bmatrix}:\delta|87 \right\}$ forms a complete set of double coset representatives of $\Gamma_{0}(N) \backslash \Gamma/ \Gamma_\infty$.
	Let $\gamma_{\delta}=\begin{bmatrix}
	1  &  0 \\
	\delta  &  1      
	\end{bmatrix}$. Let $r'=(0,0,0,0)\in R(87)$ and we use $Sage$ to verify that
	$p_{m,r}(\gamma_{\delta})+p_{r'}^{*}(\gamma_{\delta}) \geq 0$ for each $\delta | N$. We compute that the upper bound in Lemma \ref{lem1} is $\lfloor\nu\rfloor=14$. Using $Sage$ we verify that $b_3(1682n+2t')\equiv0\pmod{2}$ for all $t' \in P_{m,r}(t)$ and for $n\leq 14$. By Lemma \ref{lem1} we conclude that $b_3(1682n+2t')\equiv0\pmod{2}$ for all $t' \in P_{m,r}(t)$ and for all $n\geq 0$. 
	To prove the remaining congruences, we take $(m,M,N,r,t)=(841,3,87,(4,-1),93)$. It is easy to verify that $(m,M,N,r,t) \in \Delta^{*}$ and $P_{m,r}(t)=\{93,122,267,325,354,383,441,470,528,557,586,644,789,818\}$.
	Following similar steps as shown before, we find that $b_3(1682n+2t')\equiv0\pmod{2}$ for all $t' \in P_{m,r}(t)$ and for all $n\geq 0$. This completes the proof of the theorem.
\end{proof}
\begin{proof}[Proof of Theorem \ref{thm2}]
We begin our proof by recalling the following even-odd disection formula of the $21$-regular partitions \cite[(9)]{Keith2021}:
\begin{align*}
\sum_{n=0}^{\infty}b_{21}(n)q^n=\frac{f_{21}}{f_1}\equiv & f_1^8f_3^4+q^3f_1^8f_{21}^4+q^6\frac{f_1^8f_{21}^8}{f_3^4}+q\frac{f_3^{16}}{f_1^4}\\
&+q^4\frac{f_3^{12}f_{21}^4}{f_1^4}+q^7\frac{f_3^{8}f_{21}^8}{f_1^4}\pmod 2.
\end{align*}
Extracting the terms with odd powers of $q$, we obtain
\begin{align*}
\sum_{n=0}^{\infty}b_{21}(2n+1)q^n\equiv qf_1^4f_{21}^2+\frac{f_3^8}{f_1^2}+q^3\frac{f_3^4f_{21}^4}{f_1^2}\pmod 2.
\end{align*}
Finally, extracting the terms with even powers of $q$, we obtain
\begin{align*}
\sum_{n=0}^{\infty}b_{21}(4n+1)q^n\equiv \frac{f_3^4}{f_1}\pmod 2.
\end{align*}
Let $(m,M,N,r,t)=(841,3,87,(-1,4),414)$. We verify that $(m,M,N,r,t) \in \Delta^{*}$ and $P_{m,r}(t)=\{8,124,182,211,240,269,356,414,501,530,559,588,646,762\}$.
By Lemma \ref{lem2}, we know that $\left\{\begin{bmatrix}
	1  &  0 \\
	\delta  &  1      
	\end{bmatrix}:\delta|87 \right\}$ forms a complete set of double coset representatives of $\Gamma_{0}(N) \backslash \Gamma/ \Gamma_\infty$.
	Let $\gamma_{\delta}=\begin{bmatrix}
	1  &  0 \\
	\delta  &  1      
	\end{bmatrix}$. Let $r'=(0,0,0,0)\in R(87)$ and we use $Sage$ to verify that
	$p_{m,r}(\gamma_{\delta})+p_{r'}^{*}(\gamma_{\delta}) \geq 0$ for each $\delta | N$. We compute that the upper bound in Lemma \ref{lem1} is $\lfloor\nu\rfloor=14$. Using $Sage$ we verify that $b_{21}(4(841n+t')+1)\equiv0\pmod{2}$ for all $t' \in P_{m,r}(t)$ and for $n\leq 14$. By Lemma \ref{lem1} we conclude that $b_{21}(4(841n+t')+1)\equiv0\pmod{2}$ for all $t' \in P_{m,r}(t)$ and for all $n\geq 0$. To prove the remaining congruences, we take $(m,M,N,r,t)=(841,3,87,(-1,4),443)$. It is easy to verify that $(m,M,N,r,t) \in \Delta^{*}$ and $P_{m,r}(t)=\{37,66,95,153,298,327,443,472,617,675,704,733,791,820\}$. Following similar steps as shown before, we find that $b_{21}(4(841n+t')+1)\equiv0\pmod{2}$ for all $t' \in P_{m,r}(t)$ and for all $n\geq 0$. This completes the proof of the theorem.
\end{proof}
\section{Proof of Theorems \ref{thm3} and \ref{thm4}} 
In order to prove Theorems \ref{thm3} and  \ref{thm4}, we first prove the following lemma.
\begin{lemma}
We have
\begin{align}\label{lem1.1}
\sum_{n=0}^{\infty}b_9(2n+1)q^n=\frac{f_2^2f_3f_{18}}{f_1^3f_6};
\end{align}
\begin{align}\label{lem1.2}
\sum_{n=0}^{\infty}b_9(4n)q^n\equiv\frac{f_3^7}{f_1f_9^2}\pmod2.
\end{align}
\end{lemma}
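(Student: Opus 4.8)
The plan is to derive both identities by performing an exact $2$-dissection of the generating function $\sum_{n=0}^{\infty}b_9(n)q^n=f_9/f_1$ (the case $\ell=9$ of \eqref{gen_fun}) and then, for \eqref{lem1.2}, iterating the dissection modulo $2$. Writing $A(q):=f_9/f_1$, the even and odd parts are separated by the substitution $q\mapsto -q$:
\[
\sum_{n=0}^{\infty}b_9(2n)q^{2n}=\tfrac{1}{2}\bigl(A(q)+A(-q)\bigr),\qquad \sum_{n=0}^{\infty}b_9(2n+1)q^{2n+1}=\tfrac{1}{2}\bigl(A(q)-A(-q)\bigr).
\]
To evaluate $A(-q)$ I would use the elementary transformation $(-q;-q)_{\infty}=f_2^3/(f_1f_4)$, applied with $q$ and with $q^9$, which sends $f_1\mapsto f_2^3/(f_1f_4)$ and $f_9\mapsto f_{18}^3/(f_9f_{36})$, hence
\[
A(-q)=\frac{f_1f_4f_{18}^3}{f_2^3f_9f_{36}}.
\]

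First I would prove \eqref{lem1.1}. Substituting the expression for $A(-q)$ into the odd part reduces the claim to the eta-quotient identity
\[
\frac{f_9}{f_1}-\frac{f_1f_4f_{18}^3}{f_2^3f_9f_{36}}=2q\,\frac{f_4^2f_6f_{36}}{f_2^3f_{12}},
\]
because dividing by $q$ and replacing $q^2$ by $q$ (that is, $f_{2k}\mapsto f_k$) turns the right-hand side into $f_2^2f_3f_{18}/(f_1^3f_6)$, exactly \eqref{lem1.1}. To verify the displayed identity I would clear denominators by multiplying through by a suitable eta-product so that each term becomes a holomorphic eta-quotient of a common weight $k$ on $\Gamma_0(36)$; the hypotheses of Theorem \ref{thm_ono1} are then checked directly and holomorphy at the cusps via Theorem \ref{thm_ono2}. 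Since $[\mathrm{SL}_2(\mathbb{Z}):\Gamma_0(36)]=72$, the resulting identity of modular forms follows from the valence formula once the two sides agree up to order $6k$, a finite coefficient comparison carried out on a computer algebra system.

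For \eqref{lem1.2} I would work modulo $2$, where the computation is lighter. Reducing $f_9/f_1\equiv f_1f_9/f_2\pmod 2$ (using $1/f_1\equiv f_1/f_2$), I would obtain a mod-$2$ $2$-dissection $f_1f_9\equiv P(q^2)+q\,Q(q^2)\pmod 2$ from the pentagonal number theorem, so that $\sum_{n=0}^{\infty}b_9(2n)q^{n}\equiv P(q)/f_1\pmod 2$; a second dissection of this series then isolates the subprogression $4n$. Throughout, the eta-quotients are collapsed with the congruence $f_a^2\equiv f_{2a}\pmod 2$ (a consequence of $(1-x)^2\equiv 1-x^2\pmod 2$), and the final congruence between the extracted series and $f_3^7/(f_1f_9^2)$ is confirmed by Sturm's theorem (Theorem \ref{Sturm}) with $p=2$, after exhibiting both sides as modular forms of the same weight and character on a common $\Gamma_0(N)$. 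Equivalently, replacing $q$ by $q^4$ one checks $\sum b_9(4n)q^{4n}\equiv f_{12}^7/(f_4f_{36}^2)\pmod 2$.

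The main obstacle will be the verification of the eta-quotient identities rather than the dissection scheme itself: the bookkeeping needed to recombine the odd-index factors $f_1,f_3,f_9$ correctly after the substitution $q\mapsto -q$, the choice of the clearing eta-product that simultaneously removes every denominator and makes all cusp orders nonnegative, and the determination of the correct level $N$ and weight $k$ so that the valence bound (for \eqref{lem1.1}) and the Sturm bound (for \eqref{lem1.2}) are both valid and small enough to be checked in practice. Once the level and weight are pinned down, each identity reduces to a routine finite comparison of Fourier coefficients.
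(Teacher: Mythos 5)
Your proposal is correct in outline but follows a genuinely different route from the paper. For \eqref{lem1.1} the paper simply quotes an exact $2$-dissection of Xia and Yao (\cite[Lemma~3.5]{Xia2012}),
\begin{align*}
\frac{f_9}{f_1}=\frac{f_{12}^3f_{18}}{f_2^2f_6f_{36}}+q\,\frac{f_4^2f_6f_{36}}{f_2^3f_{12}},
\end{align*}
and reads off the odd part; your plan re-derives exactly this odd part from scratch via $q\mapsto-q$, and your computation of $A(-q)=\frac{f_1f_4f_{18}^3}{f_2^3f_9f_{36}}$ from $(-q;-q)_\infty=f_2^3/(f_1f_4)$ is correct, so the identity you must certify is equivalent to the Xia--Yao dissection. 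Your valence-formula verification is legitimate and the details do cooperate: all three terms have $\sum_{\delta}\delta r_\delta\equiv 8\pmod{24}$, weight $0$, and square values of $s=\prod_\delta\delta^{r_\delta}$ (namely $9$, $9$, $36$), so a single clearing eta-quotient places everything in one space $M_k(\Gamma_0(36),\chi)$, and with $[\Gamma:\Gamma_0(36)]=72$ your bound $6k$ is the right one. What the paper's route buys is brevity (no finite verification needed); what yours buys is self-containedness. For \eqref{lem1.2} the comparison is less favorable to you: the paper extracts the even part of the same dissection, collapses it mod $2$ via $f_a^2\equiv f_{2a}$ to $f_3^5/(f_1^2f_9)$, and then performs the second dissection by quoting Kathiravan's exact identity $\frac{f_1^3}{f_3}=\frac{f_4^3}{f_{12}}-3q\,\frac{f_2^2f_{12}^3}{f_4f_6^2}$ magnified by $q\to q^3$, after which the $4n$ extraction is immediate. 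Your sketch never identifies $P$ and $Q$, and this is the one soft spot: Sturm's theorem can only be invoked \emph{after} the extracted series is exhibited as (the reduction of) an explicit modular form, which is precisely the step the pentagonal-number bookkeeping must supply; deferring it risks circularity. It is repairable --- mod $2$ the theta pieces arising from splitting $k(3k-1)/2+9j(3j-1)/2$ by parity can be recognized as eta-quotients (compare the paper's own use of $\sum_{n\in\mathbb{Z}}q^{n(3n-1)}\equiv f_1^2\pmod 2$) --- but as written the second half of your argument is a program rather than a proof, whereas the first half is essentially complete.
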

\begin{proof}
Letting $\ell=9$ in \eqref{gen_fun} we have
\begin{align}\label{lem1.3}
\sum_{n=0}^{\infty}b_9(n)q^n=\frac{f_9}{f_1}.
\end{align}
From Lemma 3.5 in \cite{Xia2012} we have
\begin{align}\label{lem1.4}
\frac{f_9}{f_1}=\frac{f_{12}^3f_{18}}{f_2^2f_6f_{36}}+q\frac{f_4^2f_6f_{36}}{f_2^3f_{12}}.
\end{align}
Extracting the terms with odd powers of $q$ and then using \eqref{lem1.4}, we obtain
\begin{align*}
\sum_{n=0}^{\infty}b_9(2n+1)q^n=\frac{f_2^2f_3f_{18}}{f_1^3f_6}.
\end{align*}
From \eqref{lem1.4}, extracting the terms with even powers of $q$ and then using \eqref{lem1.3}, we obtain
\begin{align}\label{lem1.5}
\sum_{n=0}^{\infty}b_9(2n)q^n=\frac{f_6^3f_9}{f_1^2f_3f_{18}}\equiv \frac{f_3^5}{f_1^2f_9}\pmod 2.
\end{align}
From \cite[(2.5)]{Kathiravan2021} we have
\begin{align}\label{lem1.6}
\frac{f_1^3}{f_3}=\frac{f_4^3}{f_{12}}-3q\frac{f_2^2f_{12}^3}{f_4f_6^2}.
\end{align}
Magnifying equation \eqref{lem1.6} by $q\rightarrow q^3$ and combining with \eqref{lem1.5}, we obtain 
\begin{align*}
\sum_{n=0}^{\infty}b_9(2n)q^n\equiv \frac{f_3^2}{f_1^2}\left(\frac{f_{12}^3}{f_{36}}+q^3\frac{f_6^2f_{36}^3}{f_{12}f_{18}^2}\right)\pmod 2.
\end{align*}
Extracting the terms with even powers of $q$, we obtain
\begin{align*}
\sum_{n=0}^{\infty}b_9(4n)q^n\equiv\frac{f_3^7}{f_1f_9^2}\pmod2.
\end{align*}
This completes the proof of the lemma. 
\end{proof}
\begin{proof}[Proof of Theorem \ref{thm3}] 
Let 
\begin{align*}
A(z) := \prod_{n=1}^{\infty} \frac{(1-q^{54n})^2}{(1-q^{108n})} = \frac{\eta^2(54z)}{\eta(108z)}. 
\end{align*}
Then using the binomial theorem we have 
\begin{align*}
A^{2^{k}}(z) = \frac{\eta^{2^{k+1}}(54z)}{\eta^{2^{k}}(108z)} \equiv 1 \pmod {2^{k+1}}.
\end{align*}
Define $B_{k}(z)$ by
\begin{align*}
B_{k}(z):= \left(\frac{\eta^{2}(6z)\eta(9z)\eta(54z)}{\eta^3(3z)\eta(18z)}\right)A^{2^{k}}(z)
=\frac{\eta^{2}(6z)\eta(9z)\eta^{2^{k+1}+1}(54z)}{\eta^3(3z)\eta(18z)\eta^{2^{k}}(108z)}.
\end{align*}
Modulo $2^{k+1}$, we have
\begin{align}\label{thm3.1}
B_{k}(z)\equiv\frac{\eta^{2}(6z)\eta(9z)\eta(54z)}{\eta^3(3z)\eta(18z)} =q^{2}\left(\frac{(q^{6}; q^{6})_{\infty}^2(q^{9}; q^{9})_{\infty}(q^{54}; q^{54})_{\infty}}{(q^{3}; q^{3})_{\infty}^3(q^{18}; q^{18})_{\infty}}\right).
\end{align}
Combining \eqref{lem1.1} and \eqref{thm3.1}, we obtain 
\begin{align}\label{thm3.2}
B_{k}(z) \equiv \sum_{n=0}^{\infty}b_9(2n+1)q^{3n+2} \pmod {2^{k+1}}.
\end{align}
Now, $B_{k}(z)$ is an eta-quotient with $N =324$. We next prove that $B_{k}(z)$ is a modular form for all $k\geq 6$.  We know that the cusps of $\Gamma_{0}(324)$ are represented by fractions $\frac{c}{d}$, where $d\mid 324$ and $\gcd(c, d)=1$. By Theorem \ref{thm_ono2}, we find that $B_{k}(z)$ is holomorphic at a cusp $\frac{c}{d}$ if and only if
\begin{align*}
&\left(2^{k+1}+1\right)\frac{\gcd(d,54)^2}{54}+2\frac{\gcd(d,6)^2}{6}+\frac{\gcd(d,9)^2}{9}-3\frac{\gcd(d,3)^2}{3}-\frac{\gcd(d,18)^2}{18}\\
&-2^{k}\frac{\gcd(d,108)^2}{108}\geq 0.
\end{align*}
Equivalently, if and only if	
\begin{align*}
L:=&~(2^{k+2}+2)G_1+ 36G_2 +12G_3-108 G_4-6G_5-2^{k}\geq 0,
\end{align*}
where $G_1=\displaystyle \frac{\gcd(d,54)^2}{\gcd(d,108)^2}, G_2=\displaystyle \frac{\gcd(d,6)^2}{\gcd(d,108)^2}, 
G_3=\displaystyle\frac{\gcd(d,9)^2}{\gcd(d,108)^2}$,  $
G_4=\displaystyle \frac{\gcd(d,3)^2}{\gcd(d,108)^2}$, and $G_5=\displaystyle\frac{\gcd(d,18)^2}{\gcd(d,108)^2}$ respectively.
\par 
We now consider the following four cases according to the divisors of $324$ and find the values of $G_i$ for $i=1,2,\ldots, 5$. Let $d$ be a divisor of $N=324$. \\
Case (i). For $d|324$ and $d\notin\{4,12,36,108,324\}$, we find that $G_1=1$, $1/81\leq G_2\leq 1, 1/{36}\leq G_3\leq 1$, $1/{324}\leq G_4\leq 1$ and $1/9\leq G_5\leq1$. Hence, 
\begin{align*}
L\geq 2^{k+2}+2+36/81+12/36-108-6-2^k=3\cdot2^k-112-7/9. 
\end{align*}
Since $k\geq 6$, we have $L\geq 0$.\\
Case (ii). For $d=4,12$, we find that $G_1=G_2=G_5=1/4$ and $G_3=G_4=1/16$. Hence, $L=2$.\\
Case (iii). For $d=36$, we find that $G_1=G_5=1/4$, $G_2=1/36$, $G_3=1/16$ and $ G_4=1/144$. Hence, the value of $L$ is equal to $0$.\\
Case (iv). For $d=108,324$, we find that $G_1=1/4$, $G_2=1/324$, $G_3=1/144$, $G_4=1/1296$ and $ G_5=1/36$. Hence, we have value of $L$ equal to $4/9$.\\
Hence, $B_{k}(z)$ is holomorphic at every cusp $\frac{c}{d}$ for all $k\geq 6$.
Using Theorem \ref{thm_ono1}, we find that the weight of $B_{k}(z)$ is equal to $2^{k-1}$. Also, the associated character for $B_{k}(z)$ is given by $\chi_1=(\frac{4\cdot 3^{3\cdot 2^k+2}}{\bullet})$.
This proves that $B_{k}(z) \in M_{2^{k-1}}\left(\Gamma_{0}(324), \chi_1\right)$ for all $k\geq 6$. Also, the Fourier coefficients of $B_{k}(z)$ are all integers. Hence by Theorem \ref{Serre}, the Fourier coefficients of $B_{k}(z)$ are almost always divisible by $m=2^k$, for any positive integer $k$. Due to \eqref{thm3.2}, the same holds for $b_9(2n+1)$. This completes the proof of the theorem. 
\end{proof}
We now prove Theorem \ref{thm4}. We recall the following classical result due to Landau \cite{Landau1908}.
\begin{lemma}\label{Landau}
	Let $r(n)$ and $s(n)$ be quadratic polynomials. Then
	\begin{align*}
	\left(\sum_{n\in\mathbb{Z}}q^{r(n)}\right)\left(\sum_{n\in\mathbb{Z}}q^{s(n)}\right)
	\end{align*}
	is lacunary modulo 2.
\end{lemma}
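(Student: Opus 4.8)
The plan is to prove the stronger assertion that the coefficients of the product vanish as integers for a set of exponents of density one; since a coefficient equal to $0$ is in particular even, this immediately gives lacunarity modulo $2$ (and indeed modulo every $M$). Assume, as holds in every application, that $r(n)=an^2+bn+c$ and $s(n)=a'n^2+b'n+c'$ are integer-valued on $\mathbb{Z}$ with $a,a'>0$; positivity of the leading coefficients is forced, since otherwise the two sums are not power series in $q$. Expanding the product as a double sum,
\begin{align*}
\left(\sum_{n\in\mathbb{Z}}q^{r(n)}\right)\left(\sum_{m\in\mathbb{Z}}q^{s(m)}\right)=\sum_{N}a(N)q^N,\qquad a(N)=\#\{(n,m)\in\mathbb{Z}^2:r(n)+s(m)=N\},
\end{align*}
so that $a(N)\neq 0$ precisely when $N$ lies in the value set of the inhomogeneous quadratic $r(n)+s(m)$.

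The key step is to recognise this value set as a shift of the set of integers represented by a fixed positive definite binary quadratic form. Completing the square gives $4a\,r(n)=(2an+b)^2+(4ac-b^2)$ and $4a'\,s(m)=(2a'm+b')^2+(4a'c'-b'^2)$, so clearing denominators turns $r(n)+s(m)=N$ into
\begin{align*}
16aa'\,N-C=4a'\,(2an+b)^2+4a\,(2a'm+b')^2=:Q(u,v),
\end{align*}
where $C$ is a fixed integer constant, $Q(x,y)=4a'x^2+4ay^2$ is positive definite, and $u=2an+b$, $v=2a'm+b'$ run over fixed residue classes. Hence $\{N:a(N)\neq 0\}$ is contained in the preimage, under the affine injection $N\mapsto 16aa'N-C$, of the set of integers represented by $Q$.

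I would then invoke the classical theorem of Landau: the number of integers up to $Y$ represented by a positive definite binary quadratic form is $O\!\left(Y/\sqrt{\log Y}\right)$. Discarding the congruence conditions on $u,v$ only enlarges the represented set, and the affine injection maps $[1,X]$ into $[1,16aa'X]$, so the number of $N\leq X$ with $a(N)\neq 0$ is $O\!\left(X/\sqrt{\log X}\right)=o(X)$. Thus $a(N)=0$ for almost all $N$, and the product is lacunary modulo $2$.

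The main obstacle is this density estimate, which carries the entire analytic content; the bookkeeping with theta series and the completion of the square are routine. The subtlety is that the relevant quantity is the number of \emph{distinct} integers represented by $Q$, which is genuinely smaller than the representation count with multiplicity $\sum_{N\leq Y}a(N)=O(Y)$. For an upper bound one need only observe that an integer represented by a form of discriminant $D<0$ is an ideal norm in the corresponding imaginary quadratic order, so every prime inert in $\mathbb{Q}(\sqrt{D})$ divides it to an even power; the integers with this property are counted by a Dirichlet series with a branch singularity of type $(s-1)^{-1/2}$ at $s=1$, and the $\sqrt{\log}$ saving follows from a Selberg--Delange (Landau--Ingham) Tauberian argument. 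Since the lemma is attributed to Landau, I would simply cite it rather than reproduce this estimate.
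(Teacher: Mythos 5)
The paper offers no proof of this lemma at all---it is stated as a classical result and attributed to Landau's 1908 paper---and your argument is precisely the standard one underlying that citation: completing the square shows the support of the product is contained in an affine image of the value set of the positive definite binary form $4a'x^2+4ay^2$, and the $O\left(X/\sqrt{\log X}\right)$ bound on the number of \emph{distinct} represented integers (a subtlety you correctly isolate, since the representation count with multiplicity is of order $X$) gives the stronger conclusion that the integer coefficients themselves vanish off a set of density zero, hence lacunarity modulo every $M$, not just $2$. One citation nicety: Landau 1908 treats the form $x^2+y^2$, and the extension to arbitrary positive definite binary quadratic forms is due to Bernays (1912); your ideal-norm/Selberg--Delange sketch of the upper bound covers this, or more simply one can multiply $\alpha x^2+\beta y^2=N$ through by $\alpha$ to reduce to the principal form $u^2+\alpha\beta v^2$ and quote the two-squares-type estimate directly.
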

\begin{proof}[Proof of Theorem \ref{thm4}] 
	We first recall the following identity \cite[(7)]{Keith2021}:
	\begin{align*}
	f_1^3\equiv f_3+qf_9^3\pmod 2.
	\end{align*}
We rewrite the above identity as 
	\begin{align}\label{newthm4.1}
	\frac{f_3}{f_1}\equiv f_1^2+q\frac{f_9^3}{f_1}\pmod2.
	\end{align}
	Combining \eqref{lem1.2} and \eqref{newthm4.1}, we obtain 
	\begin{align}\label{newthm4.02}
	\sum_{n=0}^{\infty}b_9(4n)q^n\equiv \frac{f_3^6 f_1^2}{f_9^2}+q \frac{f_3^6 f_9^2}{f_1}\pmod2.
	\end{align}
	We note that the second term of \eqref{newthm4.02} is lacunary modulo $2$, by Theorem \ref{Cotron}. For the first term of \eqref{newthm4.02}, we again recall the following identity \cite[p. 12]{Keith2021} 
	\begin{align*}
	\frac{f_1^3}{f_3}\equiv 1+\sum_{n\in\mathbb{Z}}q^{(3n-1})^2\pmod 2,
	\end{align*}
	which is quadratic. Hence, the same holds true for $(f_3^3/f_9)^2$, by substituting $q$ with $q^6$. More precisely, we obtain
	\begin{align}\label{newthm4.3}
	\left(\frac{f_3^3}{f_9}\right)^2\equiv 1+\sum_{n\in\mathbb{Z}}q^{6(3n-1)^2}\pmod 2.
	\end{align}
	Now, squaring the Euler's Pentagonal Number formula, we have
	\begin{align}\label{newthm4.4}
	f_1^2\equiv \sum_{n\in\mathbb{Z}} q^{n(3n-1)}\pmod 2.
	\end{align}
	Finally combining \eqref{newthm4.3} and \eqref{newthm4.4}, and then applying Lemma \ref{Landau} we conclude that the first term of \eqref{newthm4.02} is also lacunary modulo 2. This completes the proof of the theorem.	
	\end{proof}
	\begin{remark}
Theorem \ref{thm4} can also be proved using the Serre's density result as shown in the proof of Theorem \ref{thm3}. For this, we rewrite \eqref{lem1.2} in terms of $\eta$-quotients and obtain 
\begin{align}\label{thm4.1}
\sum_{n=0}^{\infty}b_9(4n)q^{12n+1}\equiv\frac{\eta^7(36z)}{\eta(12z)\eta^2(108z)}\pmod2.
\end{align}
Let $F(z)=\frac{\eta^7(36z)}{\eta(12z)\eta^2(108z)}$. As shown in the proof of Theorem \ref{thm3}, one can prove that $F(z)\in M_{2}(\Gamma_{0}(1296), (\frac{2^8 3^{7}}{\bullet}))$. By Theorem \ref{Serre}, the Fourier coefficients of $F(z)$ are almost always divisible by $m=2$. Due to \eqref{thm4.1}, the same holds for $b_9(4n)$.
\end{remark}
\section{Acknowledgements}
We are extremely grateful to Professor Fabrizio Zanello for many helpful comments.

\end{document}